\tikzstyle{every picture}=[baseline=-0.25em]
\tikzstyle{dotpic}=[scale=0.6]
\tikzstyle{diredges}=[every to/.style={diredge}]
\tikzstyle{dot graph}=[shorten <=-0.1mm,shorten >=-0.1mm,scale=0.6]
\tikzstyle{digraph}=[-latex]
\tikzstyle{plot point}=[circle,fill=black,minimum width=2mm,inner sep=0]
\tikzstyle{string graph}=[scale=0.6]
\tikzstyle{sg diredge}=[-stealth]
\tikzstyle{rewrite edge}=[-open triangle 45]
\tikzstyle{sg bold diredge}=[-stealth,thick,shorten >=-1pt]
\tikzstyle{sg vertex}=[circle,minimum width=2.2mm,fill=white,draw=black,inner sep=0mm]
\tikzstyle{labelled sg vertex}=[circle,minimum width=7mm,fill=white,draw=black,inner sep=0mm]
\tikzstyle{sg grey vertex}=[sg vertex,fill=gray!30!white]
\tikzstyle{sg black vertex}=[sg vertex,fill=black]
\tikzstyle{sg bold vertex}=[circle,minimum width=2.2mm,fill=white,draw=black,very thick,inner sep=0mm]
\tikzstyle{sg wire vertex}=[circle,minimum width=1mm,fill=black,inner sep=0mm]
\tikzstyle{tick vertex}=[rectangle,fill=black,minimum height=1mm,minimum width=2.5mm,inner sep=0mm]
\tikzstyle{braceedge}=[decorate,decoration={brace,amplitude=2mm,raise=-1mm}]
\tikzstyle{small braceedge}=[decorate,decoration={brace,amplitude=1mm,raise=-1mm}]
\tikzstyle{left hook arrow}=[left hook-latex]
\tikzstyle{right hook arrow}=[right hook-latex]
\tikzstyle{dot}=[inner sep=0.7mm,minimum width=0pt,minimum height=0pt,fill=black,draw=black,shape=circle]
\tikzstyle{white dot}=[dot,fill=white]
\tikzstyle{alt white dot}=[white dot,label={[xshift=2.9mm,yshift=-0.1mm]left:$\cdot$}]
\tikzstyle{gray dot}=[dot,fill=gray!50]
\tikzstyle{box vertex}=[draw=black,rectangle]
\tikzstyle{whitebg}=[fill=white,inner sep=2pt]
\tikzstyle{graph state vertex}=[sg vertex,fill=black]
\tikzstyle{wide point}=[fill=white,draw=black,shape=isosceles triangle,shape border rotate=90,isosceles triangle stretches=true,inner sep=1pt,minimum width=1.5cm,minimum height=5mm]
\tikzstyle{wide copoint}=[fill=white,draw=black,shape=isosceles triangle,shape border rotate=-90,isosceles triangle stretches=true,inner sep=1pt,minimum width=1.5cm,minimum height=5mm]
\tikzstyle{symm}=[ultra thick,shorten <=-1mm,shorten >=-1mm]
\tikzstyle{square box}=[rectangle,fill=white,draw=black,minimum height=6mm,minimum width=6mm]
\tikzstyle{square gray box}=[rectangle,fill=gray!30,draw=black,minimum height=6mm,minimum width=6mm]
\tikzstyle{point}=[regular polygon,regular polygon sides=3,draw=black,scale=0.75,inner sep=-0.5pt,minimum width=7mm,fill=white]
\tikzstyle{copoint}=[point,regular polygon rotate=180,fill=white]
\tikzstyle{gray point}=[point,fill=gray!40!white]
\tikzstyle{gray copoint}=[copoint,fill=gray!40!white]
\tikzstyle{open graph}=[baseline=-0.25em]
\tikzstyle{greybg}=[background rectangle/.style={fill=black!5,draw=black!30,rounded corners=1ex}, show background rectangle]
\tikzstyle{edge point}=[circle,minimum width=1mm,fill=black,inner sep=0mm]
\tikzstyle{vertex point}=[circle,minimum width=2.2mm,fill=white,draw=black,inner sep=0mm]
\tikzstyle{gray vertex point}=[circle,minimum width=2.2mm,fill=gray!30!white,draw=black,inner sep=0mm]
\tikzstyle{edge label}=[inner sep=2pt, font=\small]
\tikzstyle{on edge label}=[fill=white, font=\footnotesize, inner sep=1 pt]
\newcommand{\edgearrow}{{\arrow[black]{>}}}
\newcommand{\edgetick}{{\arrow[black,scale=0.7,very thick]{|}}}
\tikzstyle{diredge}=[postaction=decorate,decoration={markings, mark=at position 0.55 with \edgearrow}]
\tikzstyle{medium diredge}=[postaction=decorate,decoration={markings, mark=at position 0.75 with \edgearrow}]
\tikzstyle{short diredge}=[->]
\tikzstyle{halfedge}=[-)]
\tikzstyle{other halfedge}=[(-]
\tikzstyle{freeedge}=[(-)]
\tikzstyle{white edge}=[line width=5pt,white]
\tikzstyle{tick}=[postaction=decorate,decoration={markings, mark=at position 0.5 with \edgetick}]
\tikzstyle{small map edge}=[|-latex, gray!60!blue, shorten <=0.9mm, shorten >=0.5mm]
\tikzstyle{thick dashed edge}=[very thick,dashed,gray!40]
\tikzstyle{dashed edge}=[densely dotted,thick]
\tikzstyle{map edge}=[|-latex,very thick, gray!40, shorten <=1mm, shorten >=0.5mm]
\tikzstyle{tickedge}=[postaction=decorate,
\tikzstyle{dirtickedge}=[postaction=decorate,
\tikzstyle{dirdoubletickedge}=[postaction=decorate,
\tikzstyle{arrs}=[-latex,font=\small,auto]
\tikzstyle{arrow plain}=[arrs]
\tikzstyle{arrow dashed}=[dashed,arrs]
\tikzstyle{arrow bold}=[very thick,arrs]
\tikzstyle{arrow hide}=[draw=white!0,-]
\tikzstyle{arrow reverse}=[latex-]
\tikzstyle{cdnode}=[]
\tikzstyle{cnot}=[fill=white,shape=circle,inner sep=-1.4pt]
\tikzstyle{bang box}=[draw=black,dashed,minimum height=12mm,minimum width=12mm,fill=gray!20]
\tikzstyle{wire label}=[font=\footnotesize, auto]
\newtheorem{theorem}{Theorem}[section]
\newtheorem*{theorem*}{Theorem}
\newtheorem{lemma}[theorem]{Lemma}
\newtheorem{corollary}[theorem]{Corollary}
\theoremstyle{definition}
\theoremstyle{definition}
\theoremstyle{definition}\newtheorem{definition}[theorem]{Definition}
\theoremstyle{definition}
\theoremstyle{definition}
\theoremstyle{definition}
\theoremstyle{definition}
\theoremstyle{definition}
\theoremstyle{definition}
\theoremstyle{definition}\newtheorem{notation}[theorem]{Notation}
\newcommand{\atsFree}{\ensuremath{\textbf{Free}(\mathcal A)}\xspace}
\newcommand{\catFree}{\ensuremath{\mathbb C[\hspace{0.07em}\atsFree]}\xspace}
\newcommand{\catSig}{\ensuremath{\textbf{Sig}(\mathcal A)}\xspace}
\newcommand{\catTSMC}{\ensuremath{\textrm{\bf TSMC}}\xspace}
\newcommand{\Tr}{\ensuremath{\textrm{Tr}\xspace}}
\newcommand{\catMonSig}{\ensuremath{\textrm{\bf MonSig}}\xspace}
\newcommand{\dom}{\ensuremath{\textrm{\rm dom}}}
\newcommand{\cod}{\ensuremath{\textrm{\rm cod}}}
\tikzstyle{cdiag}=[matrix of math nodes, row sep=2em, column sep=2em, text height=1.5ex, text depth=0.25ex,inner sep=0.5em]
\tikzstyle{arrow above}=[transform canvas={yshift=0.5ex}]
\tikzstyle{arrow below}=[transform canvas={yshift=-0.5ex}]
\def\bR{\begin{color}{red}} 
\def\bB{\begin{color}{blue}}
\def\bM{\begin{color}{magenta}}
\def\bC{\begin{color}{cyan}}
\def\bW{\begin{color}{white}}
\def\bBl{\begin{color}{black}} 
\def\bG{\begin{color}{green}}
\def\bY{\begin{color}{yellow}}
\def\e{\end{color}}
\title{Abstract Tensor Systems as Monoidal Categories}
\author{Aleks Kissinger\\
\\
\textit{Dedicated to Joachim Lambek on the occasion of his 90th birthday}}
\newcommand{\labelsets}{\ensuremath{\mathcal P_f(\mathcal L)}\xspace}
\newcommand{\wx}{\ensuremath{\mathbf x}\xspace}
\newcommand{\wy}{\ensuremath{\mathbf y}\xspace}
\newcommand{\wz}{\ensuremath{\mathbf z}\xspace}
\newcommand{\vi}{\ensuremath{\vec i}\xspace}
\newcommand{\vj}{\ensuremath{\vec j}\xspace}
\newcommand{\vu}{\ensuremath{\vec u}\xspace}
\newcommand{\vv}{\ensuremath{\vec v}\xspace}
\newcommand{\vw}{\ensuremath{\vec w}\xspace}
\newcommand{\vx}{\ensuremath{\vec x}\xspace}
\newcommand{\vy}{\ensuremath{\vec y}\xspace}
\newcommand{\vz}{\ensuremath{\vec z}\xspace}
\newcommand{\K}{\ensuremath{\mathcal K}\xspace}
\newcommand{\ii}{{\,(0)}}
\newcommand{\oo}{{\,(1)}}
\newcommand{\CS}{\ensuremath{\mathbb C[\mathcal S]}\xspace}
\begin{document}

\maketitle

\begin{abstract}
  The primary contribution of this paper is to give a formal, categorical treatment to Penrose's abstract tensor notation, in the context of traced symmetric monoidal categories. To do so, we introduce a typed, sum-free version of an abstract tensor system and demonstrate the construction of its associated category. We then show that the associated category of the free abstract tensor system is in fact the free traced symmetric monoidal category on a monoidal signature. A notable consequence of this result is a simple proof for the soundness and completeness of the diagrammatic language for traced symmetric monoidal categories.
\end{abstract}


\section{Introduction}

This paper formalises the connection between monoidal categories and the abstract index notation developed by Penrose in the 1970s, which has been used by physicists directly, and category theorists implicitly, via the diagrammatic languages for traced symmetric monoidal and compact closed categories. This connection is given as a representation theorem for the free traced symmetric monoidal category as a syntactically-defined strict monoidal category whose morphisms are equivalence classes of certain kinds of terms called \textit{Einstein expressions}. Representation theorems of this kind form a rich history of coherence results for monoidal categories originating in the 1960s~\cite{MacLaneCoh,EpsteinCoh}. Lambek's contribution~\cite{LambekD1,LambekD2} plays an essential role in this history, providing some of the earliest examples of syntactically-constructed free categories and most of the key ingredients in Kelly and Mac Lane's proof of the coherence theorem for closed monoidal categories~\cite{Kelly1971}. Recently, Lambek has again become interested in the role of compact closed categories (a close relative of traced symmetric monoidal categories) in linguistics and physics, both contributing~\cite{LambekPhysics} and inspiring~\cite{LambekVLambek,CoeckeLinguistics2010} ground-breaking new work there. The present work also aims to build a bridge between monoidal categories and theoretical physics, by formalising the use of a familiar language in physics within a categorical context. For these reasons, the author would like to dedicate this exposition to Lambek, on the occasion of his 90th birthday.

Tensors are a fundamental mathematical tool used in many areas of physics and mathematics with notable applications in differential geometry, relativity theory, and high-energy physics. A (concrete) \textit{tensor} is an indexed set of numbers in some field (any field will do, but we'll use $\mathbb C$).
\begin{equation}\label{eq:concrete-tensor}
  \{ \psi_{i_1,\ldots,i_m}^{j_1,\ldots,j_n} \in \mathbb C \}_{i_k, j_k \in \{1,\ldots,D\}}
\end{equation}

One should think of the lower indices as \textit{inputs} and the upper indices as outputs. Notable special cases are column vectors $v^j$, row vectors $\xi_i$, and matrices $M_i^j$. Tensors can be combined via the \textit{tensor product} and \textit{contraction}. The product of two tensors is defined as a new tensor whose elements are defined point-wise as products.
\[ (\psi\phi)_{i,i'}^{j,j'} := \psi_i^j \phi_{i'}^{j'} \]

Contraction is a procedure by which a lower index is ``plugged into'' an upper index by summing them together.
\[ \theta_i^j := \sum_{k = 1}^D \psi_i^k \phi_k^j \]

Special cases of contraction are matrix composition, application of a matrix to a vector, and the trace of a matrix. It is customary to employ the \textit{Einstein summation convention}, whereby repeated indices are assumed to be summed over.
\[ \psi_{i,j}^k \phi_{k,l}^m \xi_m^i :=
     \sum_{i,k,m} \psi_{i,j}^k \phi_{k,l}^m \xi_m^i \]

In other words, indices occurring only once are considered \textit{free} in the tensor expression (and can be used in further contractions), whereas repeated indices are implicitly \textit{bound} by the sum.

Abstract tensor notation was defined by Penrose in 1971~\cite{Penrose1971} to give an elegant way to describe various types of multi-linear maps without the encumbrance of fixing bases. It allows one to reason about much more general processes with many inputs and outputs \textit{as if} they were just tensors. In that paper, he actually introduced two notations. He introduced a term-based notation, where the objects of interest are \textit{abstract Einstein expressions}, and an equivalent (more interesting) notation that is diagrammatic. There, tensors are represented as \textit{string diagrams}. This diagrammatic notation, as an elegant way of expressing tensor expressions, has appeared in popular science~\cite{PenroseRoad}, theoretical physics~\cite{PenroseSpinors}, representation theory~\cite{Birdtracks}, and (in its more abstract form) foundations of physics~\cite{HardyCircuits}.

Twenty years after Penrose's original paper, Joyal and Street formalised string diagrams as topological objects and showed that certain kinds of these diagrams can be used to form free monoidal categories~\cite{JS}. From this work came a veritable zoo of diagrammatic languages~\cite{selinger2009survey} for describing various flavours of monoidal categories. These languages, as a calculational tool for monoidal categories, have played a crucial role in the development of categorical quantum mechanics~\cite{AC2004,CoeckeDuncan2009} and the theory of quantum groups~\cite{TuraevBook}, as well as recently finding applications in computational linguistics~\cite{LambekVLambek,CoeckeLinguistics2010}.

While categorical string diagrams were developed very much in the same spirit as Penrose's notation, little work has been done formally relating abstract tensor systems to monoidal categories. This is the first contribution of this paper. In section~\ref{sec:assoc-cat}, we show that it is possible to construct a traced symmetric monoidal category from any abstract tensor system in a natural way. Furthermore, we show that reasoning with abstract tensors is sound and complete with respect to traced symmetric monoidal categories by showing that the associated category of the free abstract tensor system is, in fact, the free traced SMC.

It is generally well known that string diagrams are sound an complete for traced symmetric monoidal categories. This fact was alluded to in Joyal and Street's original paper, but the authors stopped short of providing a proof for the case where string diagrams had ``feedback'' (i.e. traced structure). Subsequently, this fact is often stated without proof~\cite{selinger2009survey}, sketched\footnote{The sketched proof of soundness/completeness for a diagrammatic language which the authors call \textit{sharing graphs} in~\cite{PlotkinComplete} is actually a special case of a theorem in Hasegawa's thesis~\cite{HasegawaThesis}, characterising the free cartesian-center traced symmetric monoidal category, which is proved in detail therein. Thanks to Masahito Hasegawa and Gordon Plotkin for pointing this out.}, or restricted to the case where the morphisms in the diagram have exactly one input and one output~\cite{KellyLaplaza1980,Shum1994}. Thus, the second contribution of this paper is a proof of soundness and completeness of the diagrammatic language as a corollary to the main theorem about freeness of the category generated by the free abstract tensor system.



\section{Abstract tensor systems}

In this section, we shall define abstract tensor systems in a manner similar to Penrose's original definition in 1971~\cite{Penrose1971}. It will differ from the Penrose construction in two ways. First, we will not consider addition of tensors as a fundamental operation, but rather as extra structure that one could put on top (c.f. enriched category theory). Second, we will allow tensor inputs and outputs to have more than one type. In the previous section, we assumed for simplicity that the dimension of all indices was some fixed number $D$. We could also allow this to vary, as long as care is taken to only contract together indices of the same dimension. This yields a very simple example of a typed tensor system. Many other examples occur when we use tensors to study categories and operational physical theories.

For a set of types $\mathcal U = \{ A, B, C, \ldots \}$, fix a set of labels:
\[ \mathcal L = \{ a_1, a_2, \ldots, b_1, b_2, \ldots \} \]
and a typing function $\tau : \mathcal L \to \mathcal U$. We will always assume there are at least countably many labels corresponding to each type in $\mathcal U$. For finite subsets $\wx, \wy \in \labelsets$ and $\{ x_1, \ldots, x_N \} \subseteq \wx$ let:
\begin{equation}\label{eq:finite-fn}
  [x_1\mapsto y_1, x_2\mapsto y_2, \ldots, x_N \mapsto y_N] : \wx \to \wy
\end{equation}
be the function which sends each $x_i$ to $y_i$ and leaves all of the other elements of $\wx$ fixed.



\begin{definition}\label{def:abstract-tensor-system}
  A (small) \textit{abstract tensor system} consists of:

  \begin{itemize}
    \item A set $\mathcal T(\wx, \wy)$ for all $\wx, \wy \in \labelsets$ such that $\wx \cap \wy = \emptyset$,
    \item a \textit{tensor product} operation:
      \[ (- \cdot -) : \mathcal T(\wx,\wy) \times \mathcal T(\wx', \wy') \to \mathcal T(\wx \cup \wx', \wy \cup \wy') \]
      defined whenever $(\wx \cup \wy) \cap (\wx' \cup \wy') = \emptyset$
    \item a \textit{contraction} function for $a \in \wx, b\in \wy$ such that $\tau(a) = \tau(b)$:
      \[ \K_a^b : \mathcal T(\wx, \wy) \to \mathcal T(\wx - \{ a \}, \wy - \{ b \}) \]
    \item a bijection between sets of tensors, called a \textit{relabelling}:
    \[ r_f : \mathcal T(\wx,\wy) \overset{\sim}{\to} \mathcal T(f(\wx), f(\wy)) \]
    for every bijection $f : (\wx \cup \wy) \overset{\sim}{\to} \wz$ such that $\tau(f(x)) = \tau(x)$ for all $x \in \wx \cup \wy$,
    \item a chosen tensor called a \textit{$\delta$-element} $\delta_a^b \in \mathcal T(\{a\},\{b\})$ for all $a,b$ such that $\tau(a) = \tau(b)$, along with an ``empty'' $\delta$-element $1 \in \mathcal T(\emptyset,\emptyset)$
  \end{itemize}
\end{definition}


\noindent Before giving the axioms, we introduce some notation. Let $\psi\phi := \psi \cdot \phi$ and $\psi[f] := r_f(\psi)$. For $\psi \in \mathcal T(\wx,\wy)$ let $L(\psi) = \wx \cup \wy$. If a label $a$ is in $L(\psi)$, we say $a$ is \textit{free} in $\psi$. If a label occurs in a contraction, we say it is \textit{bound}. Using this notation, the axioms of an abstract tensor system are as follows:


  \begin{enumerate}
    \item[T1.] $\K_a^b(\K_{a'}^{b'}(\psi)) = \K_{a'}^{b'}(\K_a^b(\psi))$
    \item[T2.] $(\psi\phi)\xi = \psi(\phi\xi)$, $\psi 1 = \psi = 1 \psi$, and $\psi\phi = \phi\psi$
    \item[T3.] $\K_a^b(\psi\phi) = (\K_a^b(\psi))\phi$ for $a,b \notin L(\phi)$
    \item[T4.] $\K_a^b(\delta_a^c \psi) = \psi[b \mapsto c]$ and $\K_b^c(\delta_a^c \psi) = \psi[b \mapsto a]$
    \item[L1.] $\psi[f][g] = \psi[g \circ f]$ and $\psi[\textrm{id}] = \psi$
    \item[L2.] $(\psi[f])\phi = (\psi\phi)[f]$ where $\textit{cod}(f) \cap L(\phi) = \emptyset$
    \item[L3.] $\K_a^b(\psi)[f'] = \K_{f(a)}^{f(b)}(\psi[f])$ where $f'$ is the restriction of $f$ to $L(\psi) - \{ a, b \}$
    \item[L4.] $\delta_a^b[a \mapsto a', b \mapsto b'] = \delta_{a'}^{b'}$
  \end{enumerate}

Note that L3 implies in particular that the choice of bound labels in irrelevant to the value of a contracted tensor.

\begin{lemma}\rm \label{lem:bound-labels}
  Let $\psi$ be a tensor containing a lower label $a$ and upper label $b$, and let $a', b'$ be distinct labels not occurring in $L(\psi)$ such that $\tau(a) = \tau(a')$ and $\tau(b) = \tau(b')$. Then
  \[ \K_a^b(\psi) = \K_{a'}^{b'}(\psi[a \mapsto a', b \mapsto b']) \]
\end{lemma}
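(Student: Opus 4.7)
The plan is to apply axiom L3 directly with a carefully chosen relabelling bijection, then collapse the resulting identity relabelling using L1.

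Specifically, I would define $f$ to be the bijection on $L(\psi)$ that sends $a \mapsto a'$, $b \mapsto b'$, and fixes every other label; since $a', b'$ are distinct, fresh (i.e.\ not in $L(\psi)$), and type-matched with $a, b$ respectively, this $f$ is a well-defined type-preserving bijection from $L(\psi) = \wx \cup \wy$ onto $(L(\psi) - \{a,b\}) \cup \{a', b'\}$, so the relabelling $r_f$ is available. The restriction $f'$ of $f$ to $L(\psi) - \{a,b\}$ is then the identity, because $f$ only moves $a$ and $b$. Note also that $L(\K_a^b(\psi)) = L(\psi) - \{a,b\}$, so $f'$ is the identity on exactly the correct label set for applying a relabelling to $\K_a^b(\psi)$.

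Now L3 with this $f$ gives
\[
\K_a^b(\psi)[f'] \;=\; \K_{f(a)}^{f(b)}(\psi[f]) \;=\; \K_{a'}^{b'}\bigl(\psi[a\mapsto a', b\mapsto b']\bigr),
\]
and by L1 the left-hand side equals $\K_a^b(\psi)[\mathrm{id}] = \K_a^b(\psi)$. Combining the two equalities yields the lemma.

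I expect no real obstacle here: the content of the lemma is essentially the $\alpha$-equivalence of bound indices, which is baked into axioms L1--L3. The only subtlety is bookkeeping, namely verifying that (i) the hypothesis on $f$ in L3 (bijectivity and type-preservation) is met, and (ii) the restriction $f'$ really is the identity on $L(\psi) - \{a,b\}$, so that L1 can be invoked to eliminate it.
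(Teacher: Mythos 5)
Your proposal is correct and is essentially the paper's own argument: both define $f = [a \mapsto a', b \mapsto b']$, observe its restriction to $L(\psi) - \{a,b\}$ is the identity, and combine L3 with L1 to conclude. Your added bookkeeping (checking $f$ is a type-preserving bijection and that the relabelling acts on the right label set) is sound and matches what the paper leaves implicit.
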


\begin{proof}
  Let $f = [a \mapsto a', b \mapsto b']$ and note that the restriction of $f$ to $L(\psi) - \{ a, b \}$ is the identity map. Then:
  \[ \K_{a'}^{b'}(\psi[a \mapsto a', b \mapsto b']) = \K_{f(a)}^{f(b)}(\psi[f]) =
     \K_{a}^{b}(\psi)[\textrm{id}] = \K_{a}^{b}(\psi) \qedhere \]
\end{proof}


\subsection{Einstein notation and free abstract tensor systems}

$\mathcal T(\wx, \wy)$ is just an abstract set. Its elements should be thought of ``black boxes'' whose inputs are labelled by the set $\wx$ and whose outputs are labelled by the set $\wy$. Despite this sparse setting, working with abstract tensors is no more difficult than working with concrete tensors, with the help of some suggestive notation.

First, let a \textit{tensor symbol} $\Psi = (\psi, \vx, \vy)$ be a triple consisting of a tensor $\psi \in \mathcal T(\wx, \wy)$ and lists $\vx, \vy$ with no repetition such that the elements of $\vx$ are precisely $\wx$ and the elements of $\vy$ are precisely $\wy$. Equivalently, a tensor symbol is a tensor along with a total ordering on input and output labels.

\begin{notation}
  Let $\vx = [x_1, \ldots, x_m]$ and $\vy = [y_1, \ldots, y_n]$ be lists of labels. Then we write the tensor symbol $\Psi = (\psi, \vx, \vy)$ as:
  \[ \psi_{\vx}^{\vy} \quad \textrm{ or } \quad \psi_{x_1, \ldots, x_m}^{y_1, \ldots, y_n} \]
  If $m = n$ and $\tau(x_i) = \tau(y_i)$, then let:
  \[ \delta_{\vx}^{\vy} := \delta_{x_1}^{y_1} \ldots \delta_{x_n}^{y_n} \]
  In particular, the above expression evaluates to $1 \in \mathcal T(\emptyset, \emptyset)$ when $\vx = \vy = []$.
\end{notation}

An \textit{alphabet} $\mathcal A$ for an abstract tensor system is a set of tensor symbols such that for all $\wx,\wy$ each element $\psi \in \mathcal T(\wx,\wy)$ occurs at most once.

The fact that labels in a tensor symbol are ordered may seem redundant, given that the labels themselves identify inputs and outputs in $\psi$. However, it gives us a convenient (and familiar) way to express relabellings. Given $\psi_{x_1,\ldots,x_m}^{y_1,\ldots,y_n} \in \mathcal A$, we can express a relabelled tensor as:
\begin{equation}\label{eq:eins-relabel}
  \llbracket \psi_{a_1,\ldots,a_m}^{b_1,\ldots,b_n} \rrbracket =
  \psi[x_1 \mapsto a_1, \ldots, x_m \mapsto a_m,
       y_1 \mapsto b_1, \ldots, y_n \mapsto b_n]
\end{equation}

It will often be convenient to refer to arbitrary tensors $\psi_{x_1,\ldots,x_m}^{y_1,\ldots,y_n} \in \mathcal T(\wx,\wy)$ using tensor symbols. In this case, we treat subsequent references to $\psi$ (possibly with different labels) as tensors that have been relabelled according to (\ref{eq:eins-relabel}).

\begin{definition}
  An \textit{Einstein expression} over an alphabet $\mathcal A$ is a list of $\delta$-elements and (possibly relabelled) tensor symbols, where each label is either distinct or occurs as a repeated upper and lower label.
\end{definition}

For an Einstein expression $E$, let $E_{[a \mapsto a']}$ and $E^{[a \mapsto a']}$ be the same expression, but with a lower or upper label replaced. Einstein expressions are interpreted as abstract tensors as follows. First, any repeated labels are replaced by new, fresh labels of the same type, along with contractions.
\[ \llbracket E \rrbracket = \K_{a}^{\overline{a}}(
   \llbracket E^{[a \mapsto \overline{a}]} \rrbracket) \qquad
   \textrm{where $a$ is repeated and $\overline{a}$ is not in $E$} \]
Once all labels are distinct, juxtaposition is treated as tensor product:
\[
  \llbracket E E' \rrbracket = \llbracket E \rrbracket \llbracket E' \rrbracket \qquad
  \textrm{where $EE'$ has no repeated labels} 
\]
Single tensor symbols are evaluated as in equation (\ref{eq:eins-relabel}), and empty expressions evaluate to $1 \in \mathcal T(\emptyset,\emptyset)$. We will often suppress the semantic brackets $\llbracket - \rrbracket$ when it is clear we are talking about equality of tensors rather than syntactic equality of Einstein expressions.

\begin{theorem}\rm
  An Einstein expression unambiguously represents an element of some set $\mathcal T(\wx,\wy)$. Furthermore, any expression involving constants taken from the alphabet $\mathcal A$, labellings, tensor products, and contractions can be expressed this way.
\end{theorem}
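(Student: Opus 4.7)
My plan is to prove the two claims separately: well-definedness of the evaluation $\llbracket - \rrbracket$ as a function on Einstein expressions, and the fact that every composite expression built from constants in $\mathcal A$, relabellings, tensor products, and contractions can be written as an Einstein expression. The former is the substantive work; the latter is a routine structural induction.

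For well-definedness, I would identify the three sources of non-determinism in the recursive definition and discharge each with an axiom. First, the choice of fresh label $\overline{a}$ in the rule $\llbracket E \rrbracket = \K_a^{\overline{a}}(\llbracket E^{[a\mapsto \overline{a}]}\rrbracket)$ is irrelevant by Lemma~\ref{lem:bound-labels}. Second, when $E$ has several pairs of repeated labels, the order in which they are eliminated is irrelevant: after alpha-renaming both pairs to fresh labels, the two resulting outer contractions commute by T1, while L1 and L3 ensure that the intermediate relabellings do not interfere. Third, once all labels are distinct and the expression has the form of a juxtaposition $E_1 \cdots E_k$ of (possibly relabelled) tensor symbols, any parenthesisation and ordering produces the same element of $\mathcal T$ by axiom T2. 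A confluence-style argument then shows that any reduction sequence terminating in a label-disjoint product yields the same value, and this value lies in $\mathcal T(\wx,\wy)$ where $\wx$ and $\wy$ are the sets of free lower and upper labels of $E$.

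For universality, I would argue by structural induction on expressions. Constants in $\mathcal A$ are themselves tensor symbols. Given an Einstein expression $E$ for $\psi$, a relabelling $r_f(\psi)$ is represented by first alpha-renaming (via Lemma~\ref{lem:bound-labels}) the bound labels of $E$ to avoid the image of $f$, then substituting the free labels as in (\ref{eq:eins-relabel}). A tensor product $\psi\phi$ is represented by juxtaposing the two Einstein expressions after alpha-renaming so that their bound labels are mutually disjoint and disjoint from the free labels of the other. A contraction $\K_a^b(\psi)$ with $a$ lower-free and $b$ upper-free in $E$ is represented by substituting both $a$ and $b$ by a common fresh label $c$ in $E$; this label then appears as a repeated upper/lower pair whose contraction is recovered by the first evaluation rule.

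The main obstacle will be making the commutativity-of-contractions step of the well-definedness argument watertight, because eliminating repeated labels in different orders produces syntactically different intermediate expressions that already have one relabelling embedded when the second contraction is introduced. Axiom T1 only gives $\K_a^b \K_{a'}^{b'} = \K_{a'}^{b'} \K_a^b$ directly on the same tensor, so it must be combined with L2--L3 to push the intermediate relabellings past the outer contractions into a common normal form, at which point T1 applies. Once this interaction lemma is in place, the remainder of the proof reduces to careful bookkeeping about free and bound labels.
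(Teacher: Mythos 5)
Your proposal is correct, and its first half coincides with the paper's own argument: the paper likewise isolates exactly three choices in the evaluation of an Einstein expression --- order of contractions, choice of fresh labels, order of tensor products --- and discharges them by T1, Lemma~\ref{lem:bound-labels}, and T2 respectively. Where you differ is in the second half and in emphasis. For universality the paper argues by normalisation: it uses the ATS axioms to pull all contractions to the outside and push all relabellings down onto the constants, obtaining $e \equiv \K_{a_1}^{b_1}(\ldots\K_{a_n}^{b_n}(\psi_1[f_1]\cdots\psi_m[f_m])\ldots)$, and then reads off a single Einstein expression by reinstating repeated labels; your structural induction builds the Einstein expression compositionally, one operation at a time. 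These are two phrasings of the same content (the paper's ``pull out / push down'' step is itself proved by exactly the induction you describe), but your version makes the bookkeeping of free and bound labels explicit at each constructor, which the paper leaves implicit. Finally, the ``main obstacle'' you flag is less severe than you fear: in the evaluation rule $\llbracket E \rrbracket = \K_a^{\overline a}(\llbracket E^{[a\mapsto\overline a]}\rrbracket)$ the renaming is a syntactic substitution on the expression, not an ATS relabelling $r_f$, so when two repeated labels are eliminated in either order you may, by Lemma~\ref{lem:bound-labels}, choose the \emph{same} fresh labels in both orders; the inner evaluated tensors are then literally identical and T1 applies directly, with no need to push relabellings past contractions via L2--L3. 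Your more elaborate interaction lemma would also work, but the freshness discipline already built into the evaluation makes it unnecessary.
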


\begin{proof}
  First, we show that the expression $E$ represents an abstract tensor without ambiguity. In the above prescription, the choices we are free to make are (1) the order in which contractions are performed, (2) the choice of fresh labels $\overline a$, and (3) the order in which tensor products are evaluated. However, (1) is irrelevant by axiom T1 of an abstract tensor system, (2) by Lemma~\ref{lem:bound-labels}, and (3) by axiom T2.
  
  For the other direction, suppose $e$ is an expression involving constants, relabellings, tensor product, and contraction. Then, we can use the axioms of an abstract tensor system to pull contractions to the outside and push labellings down to the level of constants. Then, by the axioms of an abstract tensor system, there is an equivalence of expressions:
  \[ e \equiv \K_{a_1}^{b_1}(\K_{a_1}^{b_1}(\ldots (\K_{a_n}^{b_n}(
       \psi_1[f_1] \psi_2[f_2] \ldots \psi_m[f_m]
     )))\ldots) \]
  Let $\Psi_i$ be the tensor symbol corresponding to the relabelled constant $\psi_i[f_i]$. Then, there is an equality of tensors: 
  \( e = \llbracket (\Psi_1 \Psi_2 \ldots \Psi_m)^{[b_1 \mapsto a_1, \ldots, b_n \mapsto a_n]} \rrbracket \)
\end{proof}

By Lemma~\ref{lem:bound-labels}, the particular choice of bound labels in an Einstein expression is irrelevant. That is, $\llbracket E \rrbracket = \llbracket E_{[x \mapsto \overline x]}^{[x \mapsto \overline x]} \rrbracket$, for $x$ a bound label and $\overline x$ a new fresh label such that $\tau(x) = \tau(\overline x)$. Also note that, by axiom T4 of an abstract tensor system, it is sometimes possible to eliminate $\delta$ elements from Einstein expressions.
\[ \llbracket E \delta_a^b \rrbracket = \llbracket E^{[a \mapsto b]} \rrbracket \qquad
   \textrm{if $E$ contains $a$ as an upper label} \]
and similarly:
\[ \llbracket E \delta_a^b \rrbracket = \llbracket E_{[b \mapsto a]} \rrbracket \qquad
   \textrm{if $E$ contains $b$ as a lower label} \]
   
The only cases where such a reduction is impossible are (1) when neither label on $\delta$ is repeated, or (2) when the repeated label is on $\delta$ itself: $\delta_a^a$. For reasons that will become clear in the graphical notation, case (1) is called a \textit{bare wire} and case (2) is called a \textit{circle}. If no $\delta$-elimination is possible, an expression $E$ is called a \textit{reduced Einstein expression}. Up to permutation of tensor symbols and renaming of bound labels, this reduced form is unique.

We are now ready to define the \textit{free abstract tensor system} over an alphabet $\mathcal A$. First, define an equivalence relation on Einstein expressions. Let $E \approx E'$ if $E$ can be obtained from $E'$ by permuting tensor symbols, adding or removing $\delta$-elements as above, or renaming repeated labels. Let $|E|$ be the $\approx$-equivalence class of $E$. Then, the free abstract tensor system $\atsFree$ is defined as follows.
\begin{itemize}
  \item $\mathcal T(\wx, \wy)$ is the set of $|E|$ where $\wx$ and $\wy$ occur as non-repeated lower and upper labels in $E$, respectively
  \item $|E| \cdot |E'| = |EE'|$, with $E$ and $E'$ chosen with no common labels
  \item $\K_a^b(|E|) = |E^{[b \mapsto a]}|$
  \item $|E|[f] = |E_{[f']}^{[f']}|$ where $f'$ sends bound labels to new fresh labels (i.e. not in $\cod(f)$) of the correct type and acts as $f$ otherwise
\end{itemize}

Since we assume infinitely many labels, it is always possible to find fresh labels. Furthermore, the three tensor operations do not depend on the choice of (suitable) representative. Note, it is also possible to define the free ATS in terms of reduced Einstein expressions, in closer analogy with free groups. However, it will be convenient in the proof of Theorem~\ref{thm:free-tsmc} to let $|E|$ contain non-reduced expressions as well.

\section{Diagrammatic notation}\label{sec:diagrams}

There is another, often more convenient, alternative to Einstein notation for writing tensor expressions: \textit{string diagram} notation. Tensor symbols are represented as boxes with one input for each lower index and one output for each upper index.
\[
\psi_{a,b}^c \ \ \Rightarrow\ \ 
\beginpgfgraphicnamed{single_box}
\begin{tikzpicture}[dotpic]
	\begin{pgfonlayer}{nodelayer}
		\node [style=square box, minimum width=1 cm] (0) at (0, 0) {$\psi$};
		\node [style=none] (1) at (-0.5, -0.25) {};
		\node [style=none] (2) at (0.5, -0.25) {};
		\node [style=none] (3) at (1, -1.25) {};
		\node [style=none] (4) at (-1, -1.25) {};
		\node [style=none] (5) at (0, 0.25) {};
		\node [style=none] (6) at (0, 1.25) {};
		\node [style=none, font=\footnotesize, anchor=west, xshift=-1 mm] (7) at (-0.75, -1.25) {$a\!:\!A$};
		\node [style=none, font=\footnotesize, anchor=west, xshift=-1 mm] (8) at (1.25, -1.25) {$b\!:\!B$};
		\node [style=none, font=\footnotesize, anchor=west, xshift=-1 mm] (9) at (0.25, 1.25) {$c\!:\!C$};
	\end{pgfonlayer}
	\begin{pgfonlayer}{edgelayer}
		\draw [in=-90, out=90] (4.center) to (1.center);
		\draw [in=-90, out=90] (3.center) to (2.center);
		\draw (5.center) to (6.center);
	\end{pgfonlayer}
\end{tikzpicture}}
\endpgfgraphicnamed
\]
These inputs and outputs are marked with both a label and the type of the label, but this data is often suppressed if it is irrelevant or clear from context. A repeated label is indicated by connecting an input of one box to an output of another. These connections are called \textit{wires}.
\begin{equation}
\psi_{a,b}^c \phi_d^{b,e} \ \ \Rightarrow\ \ 
\beginpgfgraphicnamed{two_boxes}
\begin{tikzpicture}[dotpic]
	\begin{pgfonlayer}{nodelayer}
		\node [style=square box, minimum width=1 cm] (0) at (-0.75, 1) {$\psi$};
		\node [style=none] (1) at (-0.25, 0.75) {};
		\node [style=none] (2) at (0.5, -0.75) {};
		\node [style=none] (3) at (-0.75, 1.25) {};
		\node [style=none] (4) at (-0.75, 2) {};
		\node [style=square box, minimum width=1 cm] (5) at (1, -1.25) {$\phi$};
		\node [style=none] (6) at (1.75, 0.5) {};
		\node [style=none] (7) at (1.5, -0.75) {};
		\node [style=none] (8) at (1, -1.75) {};
		\node [style=none] (9) at (0.75, -2.5) {};
		\node [style=none] (10) at (-1.25, 0.75) {};
		\node [style=none] (11) at (-1.75, -0.5) {};
		\node [style=none, font=\footnotesize, anchor=west, xshift=-1 mm] (12) at (-0.5, 2) {$c\!:\!C$};
		\node [style=none, font=\footnotesize, anchor=west, xshift=-1 mm] (13) at (-1.5, -0.5) {$a\!:\!A$};
		\node [style=none, font=\footnotesize, anchor=west, xshift=-1 mm] (14) at (0.5, 0) {$B$};
		\node [style=none, font=\footnotesize, anchor=west, xshift=-1 mm] (15) at (1, -2.5) {$e\!:\!E$};
		\node [style=none, font=\footnotesize, anchor=west, xshift=-1 mm] (16) at (2, 0.5) {$d\!:\!D$};
	\end{pgfonlayer}
	\begin{pgfonlayer}{edgelayer}
		\draw [in=-82, out=90, looseness=0.75] (2.center) to (1.center);
		\draw (3.center) to (4.center);
		\draw [bend right=15] (6.center) to (7.center);
		\draw [in=90, out=-90] (8.center) to (9.center);
		\draw [in=-90, out=90] (11.center) to (10.center);
	\end{pgfonlayer}
\end{tikzpicture}}
\endpgfgraphicnamed
\end{equation}

Repeated labels are not written, as they are irrelevant by Lemma~\ref{lem:bound-labels}. $\delta$-elements are written as \textit{bare wires}, i.e. wires that are not connected to any boxes. In particular, contracting the input and the output of a $\delta$ element together yields a circle. Also, connecting a box to a bare wire has no effect, which is consistent with axiom T4.
\[
\delta_a^b\ =\ %
\beginpgfgraphicnamed{bare_wire}
\begin{tikzpicture}[dotpic]
	\begin{pgfonlayer}{nodelayer}
		\node [style=none, font=\footnotesize, yshift=1 pt, xshift=1 pt] (0) at (0.5, 0.75) {$b$};
		\node [style=none] (1) at (0.25, 0.75) {};
		\node [style=none, font=\footnotesize] (2) at (0, -0.75) {$a$};
		\node [style=none] (3) at (-0.25, -0.75) {};
	\end{pgfonlayer}
	\begin{pgfonlayer}{edgelayer}
		\draw [in=-90, out=90] (3.center) to (1.center);
	\end{pgfonlayer}
\end{tikzpicture}}
\endpgfgraphicnamed \qquad
\delta_a^a\ =\ %
\beginpgfgraphicnamed{circle}
\begin{tikzpicture}[dotpic]
	\begin{pgfonlayer}{nodelayer}
		\node [style=none] (0) at (-0.75, 0) {};
		\node [style=none] (1) at (0, 0.75) {};
		\node [style=none] (2) at (0, -0.75) {};
		\node [style=none] (3) at (0.75, 0) {};
	\end{pgfonlayer}
	\begin{pgfonlayer}{edgelayer}
		\draw [in=0, out=90] (3.center) to (1.center);
		\draw [in=90, out=180] (1.center) to (0.center);
		\draw [in=-90, out=0] (2.center) to (3.center);
		\draw [in=180, out=-90] (0.center) to (2.center);
	\end{pgfonlayer}
\end{tikzpicture}}
\endpgfgraphicnamed \qquad
\psi_{a,b}^d \delta_d^c\ =\ \ %
\beginpgfgraphicnamed{single_box_id}
\begin{tikzpicture}[dotpic]
	\begin{pgfonlayer}{nodelayer}
		\node [style=square box, minimum width=1 cm] (0) at (0, -0.25) {$\psi$};
		\node [style=none] (1) at (-0.5, -0.5) {};
		\node [style=none] (2) at (0.5, -0.5) {};
		\node [style=none] (3) at (0.75, -1.5) {};
		\node [style=none] (4) at (-0.75, -1.5) {};
		\node [style=none] (5) at (0, 0) {};
		\node [style=none] (6) at (0, 1.5) {};
		\node [style=none, font=\footnotesize] (7) at (0.25, 1.5) {$c$};
		\node [style=none, font=\footnotesize] (8) at (-0.5, -1.5) {$a$};
		\node [style=none, font=\footnotesize] (9) at (1, -1.5) {$b$};
		\node [style=none] (10) at (-0.25, 0.75) {};
		\node [style=none] (11) at (0.25, 0.75) {};
	\end{pgfonlayer}
	\begin{pgfonlayer}{edgelayer}
		\draw [in=-90, out=90] (4.center) to (1.center);
		\draw [in=-90, out=90] (3.center) to (2.center);
		\draw (5.center) to (6.center);
		\draw [style=dashed edge] (10.center) to (11.center);
	\end{pgfonlayer}
\end{tikzpicture}}
\endpgfgraphicnamed\ =\ \ %
\beginpgfgraphicnamed{single_box_simple}
\begin{tikzpicture}[dotpic]
	\begin{pgfonlayer}{nodelayer}
		\node [style=square box, minimum width=1 cm] (0) at (0, 0) {$\psi$};
		\node [style=none] (1) at (-0.5, -0.25) {};
		\node [style=none] (2) at (0.5, -0.25) {};
		\node [style=none] (3) at (0.75, -1.25) {};
		\node [style=none] (4) at (-0.75, -1.25) {};
		\node [style=none] (5) at (0, 0.25) {};
		\node [style=none] (6) at (0, 1) {};
		\node [style=none, font=\footnotesize] (7) at (0.25, 1) {$c$};
		\node [style=none, font=\footnotesize] (8) at (-0.5, -1.25) {$a$};
		\node [style=none, font=\footnotesize] (9) at (1, -1.25) {$b$};
	\end{pgfonlayer}
	\begin{pgfonlayer}{edgelayer}
		\draw [in=-90, out=90] (4.center) to (1.center);
		\draw [in=-90, out=90] (3.center) to (2.center);
		\draw (5.center) to (6.center);
	\end{pgfonlayer}
\end{tikzpicture}}
\endpgfgraphicnamed\ =\ \psi_{a,b}^c
\]

The most important feature of the diagrammatic notation is that only the connectivity of the diagram matters. Therefore, the value is invariant under topological deformations. For example:
\ctikzfig{deformation}

\begin{theorem} \rm
  For an alphabet $\mathcal A$, any tensor in $\atsFree$ can be unambiguously represented in the diagrammatic notation.
\end{theorem}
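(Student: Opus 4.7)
The plan is to define a map $\Gamma$ sending each Einstein expression $E$ over $\mathcal A$ to a string diagram $\Gamma(E)$, and to show (i) that $\Gamma$ factors through the equivalence relation $\approx$, and (ii) that $\Gamma$ is injective on $\approx$-classes. For this it is convenient to fix a precise combinatorial model of string diagrams: a diagram is a finite labelled open graph whose vertices are boxes carrying a tensor symbol of $\mathcal A$ together with a total ordering of their input and output half-edges, whose internal edges (wires) pair up one upper and one lower half-edge of the same type, and whose unpaired half-edges are labelled by elements of $\mathcal L$; disjoint components with no vertices (bare wires and circles) are allowed, also carrying labels at their free endpoints. Two diagrams are equal when they are isomorphic as such labelled objects.

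The definition of $\Gamma(E)$ is then the obvious one: place a box for each tensor symbol $\psi_{\vx}^{\vy}$ occurring in $E$; for each $\delta_a^b$ in $E$ add a bare wire component; then for each label $a$ occurring twice in $E$ (necessarily once as an upper and once as a lower half-edge, by the definition of Einstein expression) glue the two corresponding half-edges into an internal wire. All remaining half-edges retain their original labels and become the free endpoints. In particular, a $\delta_a^a$ becomes a circle, as stipulated in Section~\ref{sec:diagrams}.

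To see that $\Gamma$ descends to $\approx$-classes I would check each of the three generators of $\approx$. Permuting tensor symbols in $E$ leaves the underlying set of boxes and their wiring unchanged, so $\Gamma$ is unaffected. Renaming a repeated label $a$ to a fresh $\overline{a}$ only changes the name of the corresponding half-edges before they are glued; after gluing they become an internal wire, and internal wires carry no label data in the combinatorial model, so $\Gamma$ is again unchanged. For $\delta$-elimination, the two reductions $E\,\delta_a^b \approx E^{[a\mapsto b]}$ (when $a$ is already an upper label in $E$) and $E\,\delta_a^b \approx E_{[b\mapsto a]}$ (when $b$ is already a lower label) both correspond on the diagrammatic side to removing a length-two path through a bare wire and replacing it with a single wire — that is, inserting or deleting an identity edge — which yields an isomorphic labelled graph. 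Hence $\Gamma$ factors to a map $\bar\Gamma : \atsFree \to \catSGraph$.

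For injectivity, I would construct a converse: given a diagram $D$, choose any linear order on its boxes, assign fresh bound labels of the correct type to each internal wire, write the corresponding juxtaposition of tensor symbols $\Psi_1 \Psi_2 \cdots \Psi_m$, and finally append a $\delta_a^b$ for each bare wire or circle in $D$. Different choices of ordering and bound labels produce $\approx$-equivalent expressions, and the resulting class $|E(D)|$ satisfies $\bar\Gamma(|E(D)|) = D$ and $|E(\Gamma(E))| = |E|$; the latter follows because any two Einstein expressions with the same reduced form are $\approx$-equivalent, as noted after the definition of reduction. The main obstacle is really the first step — pinning down the combinatorial notion of diagram precisely enough that statements like ``topologically the same'' become equalities of labelled open graphs. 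Once that framework is in place, the verification above is bookkeeping, and the content of the theorem reduces to matching the generators of $\approx$ on expressions against the natural identifications of diagrams, which is exactly what the $\delta$-elimination axiom T4 and Lemma~\ref{lem:bound-labels} were set up to provide.
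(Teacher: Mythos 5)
Your proposal is correct, and its core construction coincides with the paper's, but you prove strictly more than the paper does, in the opposite direction of emphasis. The paper's proof is one-sided: given a diagram $D$, it labels the internal wires and circles freshly, writes $E = \Psi_1 \ldots \Psi_n$ from the boxes, and observes that the resulting $\approx$-class $|E|$ is independent of the choices of fresh labels and of the ordering of the $\Psi_i$ --- i.e.\ it reads ``unambiguously'' as ``a diagram denotes exactly one tensor,'' and leaves both the formal notion of diagram and the converse assignment informal. You instead fix a combinatorial model of diagrams (boxes with ordered typed half-edges, internal wires pairing an upper with a lower half-edge, labelled free endpoints, bare wires and circles as vertex-free components), define $\Gamma$ from expressions to diagrams, check invariance under the three generators of $\approx$ (permutation, bound-label renaming, $\delta$-elimination), and then exhibit the paper's construction as the inverse, obtaining a genuine bijection between $\approx$-classes and isomorphism classes of diagrams. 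What this buys is a statement covering both readings of the theorem (every tensor has a diagram, and distinct tensors have distinct diagrams), at the cost of having to commit to the combinatorial definition --- which, as you note yourself, is where the real work of making ``topologically the same'' into an equality of labelled graphs lives; in particular the $\delta$-elimination step silently relies on the model not recording a $2$-valent junction where a box half-edge is glued to a bare wire, so that the concatenation counts as a single wire. That is exactly the convention the paper adopts informally (and later makes precise only via Joyal--Street anchored diagrams), so your argument is sound; the remaining imprecision is no worse than the paper's own, and your two composite-identity checks ($\bar\Gamma(|E(D)|) = D$ and $|E(\Gamma(E))| = |E|$) are routine once the model is fixed.
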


\begin{proof}
  For a diagram $D$, form $E$ as follows. First, chose a label that does not already occur in $D$ for every wire connecting two boxes and for every circle. Then, let $E = \Psi_1 \ldots \Psi_n$, where each $\Psi_i$ is a box from $D$, with labels taken from the input and output wires. Then, $D$ represents the $\approx$-equivalence class of $E$ defined in the previous section. By definition of $|E|$, the choice of labels for previously unlabelled wires in $D$ and the order of the $\Psi_i$ are irrelevant. Thus, $D$ defines precisely one equivalence class $|E|$ in this manner.
\end{proof}

\section{Traced symmetric monoidal categories}

A monoidal category $(\mathcal C, \otimes, I, \alpha, \lambda, \rho)$ is a category that has a horizontal composition operation $\otimes : \mathcal C \times \mathcal C \to \mathcal C$ called the \textit{monoidal product} that is associative and unital (up to isomorphism) and interacts well with the categorical (aka vertical) composition. A \textit{strict} monoidal category is a monoidal category such that the natural isomorphisms $\alpha, \lambda, \rho$ are all identity maps. A \textit{symmetric} monoidal category has an additional swap map $\sigma_{A,B} : A \otimes B \to B \otimes A$ such that $\sigma_{A,B} = \sigma_{B,A}$, and it interacts well with the rest of the monoidal structure. For full details, see e.g.~\cite{MacLane}.

\begin{definition}\label{def:traced-category}
  A traced symmetric monoidal category $\mathcal C$ is a symmetric monoidal category with a function
  \[ \Tr^X : \hom_{\mathcal C}(A \otimes X, B \otimes X) \rightarrow \hom_{\mathcal C}(A, B) \]
  defined for all objects $A, B, X$, satisfying the following five axioms:\footnote{Note that some structure isomorphisms have been suppressed for clarity. The coherence theorem for monoidal categories lets us do this without ambiguity.}
  \begin{enumerate}
    \item $\Tr^X((g \otimes X) \circ f \circ (h \otimes X)) = g \circ \Tr^X(f) \circ h$
    \item $\Tr^Y(f \circ (A \otimes g)) = \Tr^X((B \otimes g) \circ f)$
    \item $\Tr^I(f) = f$ and $\Tr^{X \otimes Y}(f) = \Tr^X(\Tr^Y(f))$
    \item $\Tr^X(g \otimes f) = g \otimes \Tr^X(f)$
    \item $\Tr^X(\sigma_{X,X}) = 1_X$
  \end{enumerate}
\end{definition}

Just as monoidal categories have strict and non-strict versions, so too do monoidal functors. \textit{Strict} (traced, symmetric) monoidal functors preserve all of the categorical structure up to equality, whereas \textit{strong} functors preserve all of the structure up to coherent natural isomorphism. The term ``strong'' is used by way of contrast with \textit{lax} monoidal functors, which preserve the structure only up to (possibly non-invertible) natural transformations. Again, see~\cite{MacLane} for full definitions.

Let \catTSMC be the category of traced symmetric monoidal categories and strong monoidal functors that preserve symmetry maps and the trace operation, and let $\catTSMC_s$ be the strict version.

\subsection{The free traced symmetric monoidal category}

Two morphisms are equal in a free (symmetric, traced, compact closed, etc.) monoidal category \textit{if and only if} their equality can be established only using the axioms of that category. Thus free monoidal categories are a powerful tool for proving theorems which hold in \textit{all} categories of a particular kind. Free monoidal categories are defined over a collection of generators called a \textit{monoidal signature}.

\begin{notation}
  For a set $X$, let $X^*$ be the free monoid over $X$, i.e. the set of lists with elements taken from $X$ where multiplication is concatenation and the unit is the empty list. For a function $f : X \rightarrow Y$, let $f^* : X^* \rightarrow Y^*$ be the lifting of $f$ to lists:
\( f^*([x_1, \ldots, x_n]) = [f(x_1), \ldots, f(x_n)] \).
\end{notation}

\begin{definition}\label{def:monoidal-signature}
  A (small, strict) monoidal signature $T = (O, M, \dom, \cod)$ consists of a set of objects $O$, a set of morphisms $M$, and a pair of functions $\dom : M \rightarrow O^*$ and $\cod : M \rightarrow O^*$.
\end{definition}

The maps $\dom$ and $\cod$ should be interpreted as giving input and output types to a morphism $m \in M$. For instance, if $\dom(m) = [A,B,C]$ and $\cod(m) = [D]$, then $m$ represents a morphism $m : A \otimes B \otimes C \to D$. The empty list is interpreted as the tensor unit $I$.

  

There is also a notion of a non-strict monoidal signature. In that case, $O^*$ is replaced with the free $(\otimes, I)$-algebra over $O$. However, by the coherence theorem of monoidal categories, there is little difference between strict monoidal signatures and non-strict monoidal signatures with some fixed choice of bracketing.

\begin{definition}\label{def:monsig}
  For monoidal signatures $S$, $T$, a monoidal signature homomorphism $f$ consists of functions $f_O : O_{S} \rightarrow O_{T}$ and $f_M : M_{S} \rightarrow M_{T}$ such that $\dom_T \circ f_M = f_O^* \circ \dom_S$ and $\cod_T \circ f_M = f_O^* \circ \cod_S$. $\catMonSig$ is the category of monoidal signatures and monoidal signature homomorphisms.
\end{definition}


A monoidal signature is essentially a strict monoidal category without composition or identity maps. A monoidal signature homomorphism is thus a strict monoidal functor, minus the condition that it respect composition and identity maps.

There is an evident forgetful functor from $\catTSMC_s$ into $\catMonSig$, by throwing away composition. If this forgetful functor has a left adjoint $F$, the image of a signature $T$ under $F$ is called the \textit{free strict monoidal category} over $T$.

However, when considering the free non-strict category, the issue becomes a bit delicate. In particular, it is no longer reasonable to expect the lifted morphism $\widetilde v$ to be unique \textit{on the nose}, but rather unique up to coherent natural isomorphism. Thus, the adjunction $\catMonSig \dashv \catTSMC_s$ should be replaced with a pseudo-adjunction of some sort. To side-step such higher categorical issues, Joyal and Street simply state the appropriate correspondence between valuations of a signature and strong symmetric monoidal functors from the free category~\cite{JS}. Here, we state the traced version of their definition. Let $[T, \mathcal C]$ be the category of valuations of $T$ in $\mathcal C$ and $\catTSMC(\mathcal C, \mathcal D)$ be the category of strong traced symmetric monoidal functors from $\mathcal C$ to $\mathcal D$ and monoidal natural isomorphisms.

\begin{definition}
  For a monoidal signature \catSig, a traced symmetric monoidal category $\mathbb F(T)$ is called the free traced SMC when, for any traced SMC $\mathcal C$, there exists a valuation $\eta \in \textrm{ob}([\catSig, \mathcal C])$ such that:
  \[ ( - \circ \eta ) : \catTSMC(\mathbb F(T), \mathcal C) \to [T, \mathcal C] \]
  yields an equivalence of categories.
\end{definition}

This equivalence of categories plays an analogous role to the isomorphism of hom-sets characterising an adjunction. For brevity, we omit the definitions of $[T, \mathcal C]$ and $( - \circ \eta )$. The first represents the category of valuations of a monoidal signature $T$ into a (possibly non-strict) monoidal category $\mathcal C$ and valuation morphisms (i.e. the valuation analogue of natural transformations). The latter represents the natural way to ``compose'' a valuation $\eta$ with a strong monoidal functor to yield a new valuation. Details can be found in~\cite{JS}.



\section{The traced SMC of an abstract tensor system}\label{sec:assoc-cat}


In this section, we construct the associated traced symmetric monoidal category of an abstract tensor system. We shall see that an abstract tensor system and a traced SMC are essentially two pictures of the same thing. However, these two pictures vary in how they refer to the inputs and outputs of maps. On the one hand, traced SMCs take the input and output ordering to be fixed, and rely on structural isomorphisms for shuffling inputs and outputs around. On the other, abstract tensor systems refer to inputs and outputs by \textit{labels}, but care must be taken to make sure these labels are given in a consistent manner.

Let $\mathbb N$ be the natural numbers and $\mathbb B = \{ 0, 1 \}$. From hence forth, we will assume that the set $\mathcal L$ of labels has a special subset $\mathcal L_c \cong \mathcal U \times \mathbb N \times \mathbb B$ called the \textit{canonical} labels. We write the elements $(X,i,0)$ and $(X,i,1)$ as $x_i^\ii$ and $x_i^\oo$, respectively. Then, let:
\[ \tau(x_i^\ii) = \tau(x_i^\oo) = X \]

As we shall see in definition~\ref{def:assoc-cat}, using canonical labels allows us to impose input and output ordering on a tensor in order to treat it as a morphism in a monoidal category. It also yields a natural choice of free labels in the monoidal product of two tensors, which is an important consideration when the labels of the two tensors being combined are not disjoint.

\begin{notation}
  For $\vec X = [X_1,X_2,\ldots,X_N]$ a list of types, define the following list of labels for $1 \leq m < n \leq N$ and $i = 0,1$:
  \begin{align*}
    \vx_{m..n}^{(i)} & := [x_m^{(i)}, x_{m+1}^{(i)}, \ldots, x_{n-1}^{(i)}, x_n^{(i)}]
  \end{align*}
  The set containing the above elements is denoted $\wx_{m..n}^{(i)}$. In the case where $m = 1$ and $n = \textrm{length}(\vec X)$, we often omit the subscripts, writing simply $\vx^{(i)}$ and $\wx^{(i)}$.
\end{notation}

\begin{definition} \label{def:assoc-cat}
Let $\mathcal S = (\mathcal U, \mathcal L, \mathcal T(-,-))$ an abstract tensor system with a choice of canonical labels $\mathcal L_c \subseteq \mathcal L$. Then \CS is the traced symmetric monoidal category defined as follows:
\begin{align*}
  \textrm{ob}(\CS)        & = \mathcal U^* \\
  \hom_{\CS}(\vec X, \vec Y) & =
    \mathcal T(\wx^\ii, \wy^\oo) \\
  \vec X \otimes \vec Y & = \vec X \vec Y \quad (I = [\,])
\end{align*}
For $\psi : \vec X \to \vec Y$, $\phi : \vec Y \to \vec Z$, $\widetilde \psi : \vec U \to \vec V$, and $\xi : \vec U \otimes \vec X \to \vec V \otimes \vec X$, the rest of the structure is defined as:
\begin{align*}
  \phi_{\vy^\ii}^{\vz^\oo} \circ \psi_{\vx^\ii}^{\vy^\oo} & =
    \psi_{\vx^\ii}^{\vec{y'}} \phi_{\vec{y'}}^{\vz^\oo} \\
  \psi_{\vx^\ii}^{\vy^\oo} \otimes \widetilde \psi_{\,\vu^\ii}^{\,\vv^\oo} & =
    \psi_{\vx_{1..m}^\ii}^{\vy_{1..n}^\oo}
    \widetilde \psi_{\,\vu_{m+1..m+m'}^\ii}^{\,\vv_{n+1..n+n'}^\oo} \\
  \textrm{id}_{\vec X} & = \delta_{\vx^\ii}^{\vx^\oo} \\
  \sigma_{\vec X, \vec Y} & = 
    \delta_{\vx_{1..m}^\ii}^{\vx_{n+1..n+m}^\oo}
    \delta_{\vy_{m+1..m+n}^\ii}^{\vy_{1..n}^\oo} \\
  \textrm{Tr}^{\vec X}(\xi_{\vu_{1..m}^\ii \vx_{m+1..m+k}^\ii}^{\vv_{1..n}^\oo \vx_{n+1..n+k}^\oo}) & =
    \xi_{\vu_{1..m}^\ii\vec{x'}}^{\vv_{1..n}^{(1)}\vec{x'}}
\end{align*}
where $\vec{x'}$ and $\vec{y'}$ are chosen as fresh (possibly non-canonical) labels.
\end{definition}

\begin{theorem} \rm
  \CS is a strict, traced symmetric monoidal category.
\end{theorem}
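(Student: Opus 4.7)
The plan is to verify each structural axiom by translating it into Einstein notation and then appealing to the abstract tensor system axioms T1--T4 and L1--L4. Since every operation in Definition~\ref{def:assoc-cat} is built from tensor product, contraction, and relabelling, each categorical axiom becomes a short identity between tensor expressions. I would not treat this as one big computation but as a sequence of small bookkeeping checks grouped by structure: category axioms, strict monoidal axioms, symmetry, then trace.

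First I would verify the category axioms. The identity laws $\textrm{id}_{\vec Y} \circ \psi = \psi = \psi \circ \textrm{id}_{\vec X}$ unpack to checking $\K_{\vec{y'}}^{\vec{y'}}(\psi_{\vx^\ii}^{\vec{y'}} \delta_{\vec{y'}}^{\vy^\oo})$ equals $\psi_{\vx^\ii}^{\vy^\oo}$, which is iterated T4 (one contraction per label in $\vec Y$). Associativity of $\circ$ follows from T1 (contractions commute) together with T2 (tensor product is commutative and associative), after picking two disjoint sets of fresh internal labels for the two layers. Strict associativity and unitality of $\otimes$ on objects is immediate because $O^*$ is a free monoid under concatenation.

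For the remaining monoidal structure I would handle bifunctoriality, symmetry and the trace axioms in turn. Bifunctoriality and the interchange law $(g \circ f) \otimes (g' \circ f') = (g \otimes g') \circ (f \otimes f')$ reduce to T3, which says contractions commute with tensor product when labels are disjoint; L2 and L3 are invoked to line up the canonical input/output labels on each side. For the symmetry, the two interesting checks are $\sigma_{\vec Y, \vec X} \circ \sigma_{\vec X, \vec Y} = \textrm{id}_{\vec X \vec Y}$ (two rounds of T4 to eliminate the $\delta$'s pairwise) and the hexagon, which is again label-shuffling via L1--L4. For trace, axiom~1 (tightening) is T3 combined with a T4 on each side; axiom~2 (sliding) is T1 with a relabelling via L3; axiom~3 (vanishing) is trivial at $I$ and reduces to T1 for $X \otimes Y$; axiom~4 (superposing) is T3; and yanking (axiom~5) is the direct computation $\K_{\vec{x'}}^{\vec{x'}}(\delta_{\vx^\ii}^{\vec{x'}} \delta_{\vec{x'}}^{\vx^\oo}) = \delta_{\vx^\ii}^{\vx^\oo}$ by T4.

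The main obstacle is not any single axiom but the uniform bookkeeping of canonical labels: whenever two morphisms of $\CS$ are combined by $\circ$ or $\otimes$, their canonical label sets overlap, and we must silently relabel before the tensor axioms apply. I would deal with this once and for all by proving an auxiliary \emph{normal form lemma} at the start: any expression built from morphisms of $\CS$ using $\circ$ and $\otimes$ equals $\llbracket E \rrbracket$ for an Einstein expression $E$ obtained by (i) choosing pairwise disjoint fresh labels for every internal wire, and (ii) relabelling the free labels to the canonical labels demanded by the target hom-set via a single application of L1. With this lemma in place, each of the axioms above becomes a one- or two-line calculation at the level of reduced Einstein expressions, and one can confirm that the representative in $\CS$ (i.e.\ the $\approx$-equivalence class, when working in $\atsFree$, or the abstract tensor in general) is independent of the fresh label choices by Lemma~\ref{lem:bound-labels}.
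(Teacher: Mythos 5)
Your proposal is correct and follows essentially the same route as the paper: unpack each categorical axiom into Einstein notation and discharge it via the ATS axioms T1--T4, L1--L4 together with Lemma~\ref{lem:bound-labels} for fresh/bound label bookkeeping. The paper is simply terser --- it computes associativity and the interchange law explicitly and declares the symmetry and trace axioms routine --- whereas you spell out the axiom-by-axiom attributions and add an (optional but harmless) normal-form observation for the relabelling step.
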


\begin{proof}
Associativity follows from ATS axioms (used implicitly in the Einstein notation):
\[
  (\xi \circ \phi) \circ \psi =
   \psi_{\vx^{(0)}}^{\vec{y'}} \phi_{\vec{y'}}^{\vec{z'}} \xi_{\vec{z'}}^{\vw^{(1)}} =
  \xi \circ (\phi \circ \psi)
\]
and similarly for identity maps. Associativity and unit laws of the monoidal product follow straightforwardly from associativity of $(- \cdot -)$. The interchange law can be shown as:
\begin{align*}
  (\widetilde \psi_{\vy^{(0)}}^{\vz^{(1)}} \otimes \widetilde \phi_{\vv^{(0)}}^{\vw^{(1)}}) \circ
  (\psi_{\vx^{(0)}}^{\vy^{(1)}} \otimes \phi_{\vu^{(0)}}^{\vv^{(1)}})
  & =
  \psi_{\vx_{1..m}^{(0)}}^{\vec{w'}} \phi_{\vu_{m+1..m+m'}^{(0)}}^{\vec{x'}}
  \widetilde \psi_{\vec{w'}}^{\,\vy_{1..n}^{(1)}} \widetilde \phi_{\vec{x'}}^{\,\vz_{n+1..n+n'}^{(1)}} \\
  & =
  \psi_{\vx_{1..m}^{(0)}}^{\vec{w'}} \widetilde \psi_{\vec{w'}}^{\,\vy_{1..n}^{(1)}}
  \phi_{\vu_{m+1..m+m'}^{(0)}}^{\vec{x'}} \widetilde \phi_{\vec{x'}}^{\,\vz_{n+1..n+n'}^{(1)}} \\
  & =
  (\widetilde \psi \circ \psi) \otimes (\widetilde \phi \circ \phi)
\end{align*}
Verification of the symmetry and trace axioms is a routine application of the ATS axioms.
\end{proof}

\section{The free ATS and the free traced SMC}

In this section, we will show that the free abstract tensor system over an alphabet induces a free traced symmetric monoidal category. We assume for the remainder of this section that tensor symbols in an alphabet $\mathcal A$ are canonically labelled. That is, they are of the form $\psi_{\vx^\ii}^{\vy^\oo} \in \mathcal A$. As the labels have no \textit{semantic} content, we can always replace an arbitrary alphabet with a canonically labelled one.

Also note that canonically labelled alphabets and monoidal signatures are essentially the same thing. Let $\textrm{Sig}(\mathcal A)$ be the monoidal signature with morphisms $\psi_{\vx^\ii}^{\vy^\oo} \in \mathcal A$ and the $\dom$ and $\cod$ maps defined by:
\[
\dom(\psi_{\vx^\ii}^{\vy^\oo}) = \vec X \qquad\qquad
\cod(\psi_{\vx^\ii}^{\vy^\oo}) = \vec Y
\]
For any signature $S = (\mathcal O, \mathcal M, \dom, \cod)$, it is always possible to define an alphabet $\mathcal A$ such that $S = \catSig$. Thus, we will often use the designation \catSig to refer to an \textit{arbitrary} monoidal signature.


For $\atsFree$ the free ATS over $\mathcal A$, we will show that $\mathbb C[\atsFree]$ is the free traced SMC over \catSig. We will do this by first considering the strict case, where we construct the unique strict traced symmetric monoidal functor $\widetilde v$ that completes the following diagram, for signature homomorphisms $\eta, v$:
\begin{equation}\label{eq:free-triangle}
  \begin{tikzpicture}
    \matrix (m) [cdiag] {
      \catSig & \catFree  \\
        & \mathcal C \\
    };
    \path [arrs]
      (m-1-1) edge [-latex] node {$\eta$} (m-1-2)
      (m-1-1) edge node [swap] {$v$} (m-2-2)
      (m-1-2) edge [dashed] node {$\widetilde v$} (m-2-2);
  \end{tikzpicture}
\end{equation}

Before we get to the bulk of the proof, we introduce some notation. The first thing we introduce is the notion of \textit{labelling} a morphism.



\begin{definition}\label{def:labelling}
  For a set $\mathcal L$ of labels and a function $\mu : \mathcal L \to \textrm{ob}(\mathcal C)$, an object is called $\mu$-labelled if it is equipped with a list \vi such that:
  \[ X = \mu(i_1) \otimes \mu(i_2) \otimes \ldots \otimes \mu(i_n) \]
  A morphism is called $\mu$-labelled if its domain and codomain have $\mu$-labellings for disjoint lists \vi, \vj.
\end{definition}

To simplify notation, we write $\mu$ labelled objects as follows:
\[ X = X_{i_1} \otimes X_{i_2} \otimes \ldots \otimes X_{i_n} \]
where $X_{i_k} = \mu(i_k)$. For a $\mu$-labelled object $(X,\vec i\,)$ and a label $i \in \vi$, $\sigma_{X:i}$ is the (unique) symmetry map that permutes the object $X_i$ to the end of the list and leaves the other objects fixed.
\begin{align*}
  \sigma_{X:x} & = %
\beginpgfgraphicnamed{sigma_i}
\begin{tikzpicture}
	\begin{pgfonlayer}{nodelayer}
		\node [anchor=north, style=none, font=\footnotesize, yshift=0.5 mm] (0) at (-1.5, -1) {$X_{i_1}$};
		\node [anchor=north, style=none, font=\footnotesize, yshift=0.5 mm] (1) at (0, -1) {$X_{i}$};
		\node [anchor=north, style=none, font=\footnotesize, yshift=0.5 mm] (2) at (1.5, -1) {$X_{i_M}$};
		\node [style=none] (3) at (-1.5, -0.75) {};
		\node [style=none] (4) at (-0.5, -0.75) {};
		\node [style=none] (5) at (0, -0.75) {};
		\node [style=none] (6) at (0.5, -0.75) {};
		\node [style=none] (7) at (1.5, -0.75) {};
		\node [style=none] (8) at (1, -0.5) {...};
		\node [style=none] (9) at (-1.5, -0.25) {};
		\node [style=none] (10) at (-0.5, -0.25) {};
		\node [style=none] (11) at (0, -0.25) {};
		\node [style=none] (12) at (0.5, -0.25) {};
		\node [style=none] (13) at (1.5, -0.25) {};
		\node [style=none] (14) at (-1, 0) {...};
		\node [style=none] (15) at (-1.5, 0.25) {};
		\node [style=none] (16) at (-0.5, 0.25) {};
		\node [style=none] (17) at (0.5, 0.25) {};
		\node [style=none] (18) at (1.5, 0.25) {};
		\node [style=none] (19) at (2, 0.25) {};
		\node [style=none] (20) at (1, 0.5) {...};
		\node [style=none] (21) at (-1.5, 0.75) {};
		\node [style=none] (22) at (-0.5, 0.75) {};
		\node [style=none] (23) at (0.5, 0.75) {};
		\node [style=none] (24) at (1.5, 0.75) {};
		\node [style=none] (25) at (2, 0.75) {};
		\node [anchor=north, style=none, font=\footnotesize, yshift=2 mm] (26) at (-1.5, 1) {$X_{i_1}$};
		\node [anchor=north, style=none, font=\footnotesize, yshift=2 mm] (27) at (1.5, 1) {$X_{i_M}$};
		\node [anchor=north, style=none, font=\footnotesize, yshift=2 mm] (28) at (2, 1) {$X_{i}$};
	\end{pgfonlayer}
	\begin{pgfonlayer}{edgelayer}
		\draw [style=diredge] (15.center) to (21.center);
		\draw [style=diredge] (19.center) to (25.center);
		\draw (13.center) to (18.center);
		\draw [style=diredge] (16.center) to (22.center);
		\draw [style=diredge] (18.center) to (24.center);
		\draw (7.center) to (13.center);
		\draw (10.center) to (16.center);
		\draw (5.center) to (11.center);
		\draw (4.center) to (10.center);
		\draw (3.center) to (9.center);
		\draw [style=diredge] (17.center) to (23.center);
		\draw (12.center) to (17.center);
		\draw (11.center) to (19.center);
		\draw (6.center) to (12.center);
		\draw (9.center) to (15.center);
	\end{pgfonlayer}
\end{tikzpicture}}
\endpgfgraphicnamed
\end{align*}

In any traced SMC, we can define a contraction operator $C_i^j(-)$ which ``traces together'' the $i$-th input with the $j$-th output on a labelled morphism.

\begin{definition}
  Let $f : X_{i_1} \otimes \ldots \otimes X_{i_M} \rightarrow  Y_{j_1} \otimes \ldots \otimes Y_{j_N}$ be a labelled morphism in a traced symmetric monoidal category such that for labels $i \in \{ i_1,\ldots,i_M \}$ and $j \in \{ j_1,\ldots,j_N \}$, $X_i = Y_j$. Then we define the \textit{trace contraction} $C_i^j(f)$ as follows:
  \begin{align*}
    C_i^j(f) & := \Tr^{X_i = Y_j}(\sigma_{Y:j} \circ f \circ \sigma_{X:i}^{-1})
  \end{align*}
\end{definition}


Note that a contraction of a labelled morphism yields another labelled morphism, by deleting the contracted objects from the label lists. Thus we can contract many times, and the resulting morphism does not depend on the order in which we perform contractions.

\begin{lemma} \rm \label{lem:contractions-commutative}
  Contractions are commutative. For a labelled morphism $f$ distinct indices $i,i'$ and $j,j'$:
  \[ C_{i}^{j}(C_{i'}^{j'}(f)) = C_{i'}^{j'}(C_{i}^{j}(f)) \]
\end{lemma}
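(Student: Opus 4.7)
The plan is to unfold both sides using the definition of $C$, use the trace naturality and vanishing axioms to collapse the nested traces into a single trace over a tensor product, and then invoke the symmetry/yanking axioms to show both orderings coincide.

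Concretely, I first expand
\[ C_{i'}^{j'}(C_i^j(f)) = \Tr^{X_{i'}}\bigl(\sigma_{Y:j'} \circ \Tr^{X_i}(\sigma_{Y:j} \circ f \circ \sigma_{X:i}^{-1}) \circ \sigma_{X:i'}^{-1}\bigr). \]
Since the outer symmetries $\sigma_{Y:j'}$ and $\sigma_{X:i'}^{-1}$ act only on labels other than $i$ (because $i' \neq i$, $j' \neq j$, and after contracting they leave $X_i$ untouched), axiom (1) of Definition~\ref{def:traced-category} lets me pull them inside the inner $\Tr^{X_i}$ as $\sigma_{Y:j'} \otimes 1_{X_i}$ and $\sigma_{X:i'}^{-1} \otimes 1_{X_i}$. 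Axiom (3) then collapses the two nested traces into a single $\Tr^{X_{i'} \otimes X_i}$ applied to the morphism
\[ (\sigma_{Y:j'} \otimes 1_{X_i}) \circ (\sigma_{Y:j} \otimes 1_{X_{i'}}) \circ f \circ (\sigma_{X:i}^{-1} \otimes 1_{X_{i'}}) \circ (\sigma_{X:i'}^{-1} \otimes 1_{X_i}). \]
The symmetric computation for $C_i^j(C_{i'}^{j'}(f))$ yields a trace over $X_i \otimes X_{i'}$ of the analogous composite with $i,j$ and $i',j'$ interchanged throughout.

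Next I bridge these two single-trace expressions. Using axiom (3) in reverse, a trace over $X_{i'} \otimes X_i$ can be rewritten as a trace over $X_i \otimes X_{i'}$ by conjugating the argument with the swap $\sigma_{X_{i'}, X_i}$ on both the domain and codomain; this identity is the standard consequence of axioms (1), (2), and (5) (precomposing with $\sigma\circ\sigma^{-1}=1$ and then yanking one of the symmetries through the trace). After this rewrite, the two pre-trace morphisms that must be compared differ only by composites of symmetries that permute the bundle $X_i \otimes X_{i'}$ in two ways. Both composites implement the same underlying permutation of labels (namely, moving wires $i,i'$ to the far right of the domain and $j,j'$ to the far right of the codomain, in some fixed order), so by coherence for symmetric monoidal categories they are equal, and the desired identity follows.

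The main obstacle is bookkeeping rather than any deep categorical step: the symmetry $\sigma_{X:i'}$ is defined relative to the current label list, so after performing $C_i^j$ the label $i$ has been removed and $\sigma_{X:i'}^{-1}$ must be re-read on the shortened list before being tensored with $1_{X_i}$ and pushed under the inner trace. Tracking which wires are permuted where, and verifying that the two resulting permutations of $X_i \otimes X_{i'}$ agree, is the fiddly part; once this is in place, the algebraic manipulation with axioms (1)--(5) is routine.
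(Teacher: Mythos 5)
Your proof is correct and is the expected argument: the paper states Lemma~\ref{lem:contractions-commutative} without an inline proof, deferring to~\cite{KissingerThesis}, and the strategy you use (expand the definition of $C_i^j$, push the outer symmetries under the inner trace by axiom 1, merge the nested traces by axiom 3, exchange the traced pair of wires, and finish by symmetric coherence on the resulting permutations, minding that $\sigma_{X:i'}$ is read on the shortened label list) is essentially the argument given there. One minor point: the wire-exchange identity $\Tr^{U\otimes V}(g)=\Tr^{V\otimes U}\bigl((1\otimes\sigma_{U,V})\circ g\circ(1\otimes\sigma_{V,U})\bigr)$ follows from sliding (axiom 2, taking the slid morphism to be the symmetry) together with naturality; yanking (axiom 5) is not actually needed, but this misattribution does not affect the correctness of your proof.
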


\begin{definition}\label{def:disconnected-labelled}
  For a strict traced symmetric monoidal category $\mathcal C$, define a set $M$ of \textit{atomic} morphisms, such that any morphism in $\mathcal C$ can be obtained from those morphisms and the traced symmetric structure. An labelled morphism is called \textit{disconnected} if it is of the form $f = f_1 \otimes \ldots \otimes f_K$, where each $f_k$ is a labelled morphism in $M$:
  \[ f_k : X_{i_{k,1}} \otimes \ldots X_{i_{k,M_k}} \rightarrow Y_{j_{k,1}} \otimes \ldots \otimes Y_{j_{k,N_k}} \]
\end{definition}

\begin{definition}\label{def:cnf}
  Let $f = f_1 \otimes \ldots \otimes f_M$ be a disconnected labelled morphism. For distinct indices $\{i_1,\ldots,i_P\} \subseteq \{i_{1,1},\ldots i_{K,M_K} \}$ and $\{j_1,\ldots,j_P\} \subseteq \{j_{1,1},\ldots j_{K,N_K} \}$, a map $f'$ is said to be in \textit{contraction normal form} (CNF) if:
  \[ f' = C_{i_1}^{j_1}(C_{i_2}^{j_2}(\ldots(C_{i_P}^{j_P}(f))\ldots)) \]
\end{definition}

\begin{definition}\label{def:totally-contracted}
  Let $f$ and $f'$ be given as in Definition \ref{def:cnf}. A component $f_k$ of $f$ is said to be \textit{totally contracted} if the indices of all of its inputs occur in $\{i_1,\ldots,i_P\}$ and the indices of all of its outputs occur in $\{j_1,\ldots,j_P\}$.
\end{definition}

\begin{lemma}\rm \label{lem:reorder-tc}
  For $f$ and $f'$ from Definition \ref{def:cnf}, totally contracted components of $f$ can be re-ordered arbitrarily by relabelling.
\end{lemma}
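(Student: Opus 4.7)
My plan is to reduce to the case of swapping two adjacent totally contracted components, since any permutation of the components can be realised by a sequence of adjacent transpositions. So write $f = g_L \otimes f_k \otimes f_{k+1} \otimes g_R$, with $g_L = f_1 \otimes \ldots \otimes f_{k-1}$ and $g_R = f_{k+2} \otimes \ldots \otimes f_K$, and let $\widehat f := g_L \otimes f_{k+1} \otimes f_k \otimes g_R$. By the symmetric monoidal structure, these two disconnected labelled morphisms are related by conjugation with symmetries:
\[
  f \;=\; (\mathrm{id} \otimes \sigma^{\mathrm{out}} \otimes \mathrm{id}) \circ \widehat f \circ (\mathrm{id} \otimes \sigma^{\mathrm{in}} \otimes \mathrm{id}),
\]
where $\sigma^{\mathrm{in}}$ permutes the input objects of $f_k$ and $f_{k+1}$ (identified by the index blocks $\{i_{k,1},\dots,i_{k,M_k}\}$ and $\{i_{k+1,1},\dots,i_{k+1,M_{k+1}}\}$) and $\sigma^{\mathrm{out}}$ permutes their output objects analogously.

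The crucial observation is that, since $f_k$ and $f_{k+1}$ are totally contracted, every label appearing on the wires being permuted by $\sigma^{\mathrm{in}}$ and $\sigma^{\mathrm{out}}$ lies in $\{i_1,\dots,i_P\}$ or $\{j_1,\dots,j_P\}$ respectively; none of them is a free label of $f'$. Thus the conjugating symmetries act entirely within the wires that will be closed off by the trace contractions $C_{i_s}^{j_s}$. Using Lemma~\ref{lem:contractions-commutative} I can reorder the contractions in $f'$ so that those touching $f_k$ and $f_{k+1}$ are performed last, and then push the $\sigma^{\mathrm{in}}, \sigma^{\mathrm{out}}$ inside the relevant traces one wire at a time via axiom~1 of Definition~\ref{def:traced-category} (which absorbs pre/post composition on traced components). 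Because $C_i^j$ is defined through $\sigma_{X:i}$ and $\sigma_{Y:j}$, which locate objects by their \emph{labels} and not by their positions, the net effect of pulling $\sigma^{\mathrm{in}}, \sigma^{\mathrm{out}}$ through the $\sigma_{X:i}$ and $\sigma_{Y:j}$ is just to permute the order in which the trace wires are bracketed. Using axiom~3 (associativity of the trace over $X \otimes Y$) together with axiom~5 on the residual transpositions, these rebracketings yield the identity on the traced wires, leaving a possible renaming of the bound indices.

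The main obstacle is carrying out this rewriting bookkeeping carefully: the symmetries $\sigma^{\mathrm{in}}, \sigma^{\mathrm{out}}$ involve multiple wires, and the $\sigma_{X:i}$ are themselves nontrivial composites of elementary symmetries, so one must verify by induction on the number of traced wires attached to $f_k$ and $f_{k+1}$ that each elementary transposition in $\sigma^{\mathrm{in}}, \sigma^{\mathrm{out}}$ is absorbed without leaving residue on the free inputs/outputs of $f'$. Once this is done, the equality $C_{i_1}^{j_1}(\dots C_{i_P}^{j_P}(f)\dots) = C_{i_1}^{j_1}(\dots C_{i_P}^{j_P}(\widehat f)\dots)$ holds after at most a renaming of the bound contraction indices, which is the claimed relabelling.
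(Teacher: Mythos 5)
The paper does not actually prove this lemma in the text---it defers to~\cite{KissingerThesis}---so your proposal can only be judged on its own terms. Your overall plan (reduce to adjacent transpositions, relate $f$ and $\widehat f$ by naturality of the symmetry, then argue that the conjugating symmetries are annihilated once every wire they touch has been contracted) is the natural strategy and is in the same spirit as the explicit labelled-morphism bookkeeping carried out in the thesis. The observation that $C_i^j$ locates wires by label rather than position, and that Lemma~\ref{lem:contractions-commutative} lets you schedule the contractions conveniently, are the right ingredients.

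However, the step that \emph{is} the lemma---the absorption of $\sigma^{\mathrm{in}},\sigma^{\mathrm{out}}$ by the contractions---is asserted rather than proved, and the axioms you cite for it are not the ones that can do the work. Axiom~1 of Definition~\ref{def:traced-category} (tightening) only moves morphisms acting on the \emph{untraced} tensor factors in and out of a trace; it is legitimately usable (with naturality) to commute the conjugating symmetries past the contractions on wires they do not touch, but it cannot cancel a symmetry acting on the wires being traced. Axiom~5 applies only when the traced morphism is literally $\sigma_{X,X}$, and axiom~3 merely rebundles traced wires, so ``axiom~3 together with axiom~5 on the residual transpositions'' does not by itself yield the required cancellation. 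What is needed at the crux is the sliding axiom (axiom~2), e.g.\ in the form of the derived fact that $\Tr$ is invariant under conjugating the traced bundle by a symmetry, $\Tr^{X}\bigl((B\otimes\pi)\circ f\circ(A\otimes\pi^{-1})\bigr)=\Tr^{X'}(f')$; you never invoke axiom~2, and without it the induction you describe has no engine. A smaller point: swapping only \emph{adjacent totally contracted} components generates arbitrary permutations of a contiguous block of such components (which suffices for the use in Theorem~\ref{thm:free-tsmc}, where they all sit after the two $\delta$-lists), but it does not literally give ``arbitrary re-ordering'' when totally contracted components are separated by components with free wires; either restrict the claim accordingly or extend the swap argument to moving a totally contracted component past an arbitrary adjacent component, checking that the residual permutation on the free wires becomes the identity after the traced objects are deleted.
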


\begin{lemma}\rm \label{cor:totally-contracted-ids}
  Let $f, f'$ be Defined as in \ref{def:cnf}. If $f_k = 1_{X_{i_{k,1}}} = 1_{Y_{j_{k,1}}}$ is a totally contracted identity map that is not a circle (i.e. it is not contracted to itself), then it can be removed by relabelling.
\end{lemma}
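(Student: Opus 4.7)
The underlying picture is that a wire passing through an identity box is just a longer wire, so the claim should follow from the trace axioms together with the yanking equation $\Tr^X(\sigma_{X,X})=1_X$. By hypothesis $f_k = 1_{X}$ is totally contracted but not a circle, so its (unique) input index $i_{k,1}$ is paired by some contraction $C_{i_{k,1}}^{j_s}$ with an output index $j_s$ belonging to a different component, and its output index $j_{k,1}$ is paired by $C_{i_t}^{j_{k,1}}$ with an input index $i_t$ belonging to a different component, with $j_s \neq j_{k,1}$ and $i_t \neq i_{k,1}$. Write $f = g \otimes 1_X$ where $g = \bigotimes_{l \neq k} f_l$.

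First, I would appeal to Lemma~\ref{lem:contractions-commutative} to reorder the contractions so that $C_{i_{k,1}}^{j_s}$ and $C_{i_t}^{j_{k,1}}$ are applied last, and let $h$ denote the partially contracted map obtained by applying the remaining $P-2$ contractions to $g$. The lemma then reduces to the identity
\[
  C_{i_{k,1}}^{j_s}\!\left(C_{i_t}^{j_{k,1}}(h \otimes 1_X)\right) \;=\; C_{i_t}^{j_s}(h),
\]
since the right-hand side is precisely the contraction normal form associated with the disconnected morphism $\bigotimes_{l \neq k} f_l$ together with the pair $\{(i_{k,1},j_s),(i_t,j_{k,1})\}$ of contractions replaced by the single contraction $(i_t,j_s)$, which is what ``removed by relabelling'' means.

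Second, I would verify this identity by expanding both contractions using the definition $C_i^j(-) = \Tr^X(\sigma_{Y:j} \circ (-) \circ \sigma_{X:i}^{-1})$, combining the two nested traces into a single trace over $X\otimes X$ using axiom~(3) of Definition~\ref{def:traced-category}, then using naturality and sliding (axioms~(1) and~(2)) to push the $1_X$ factor and the interleaved symmetries around so that the configuration becomes $\Tr^X(\sigma_{X,X})$ tensored with the symmetry-sandwiched form of $h$. Axiom~(5) then collapses the inner loop to $1_X$, leaving exactly the single trace defining $C_{i_t}^{j_s}(h)$.

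The conceptual content is trivial; the main obstacle is purely notational bookkeeping. Because the contraction operator $C_i^j$ has symmetry maps $\sigma_{X:i},\sigma_{Y:j}$ baked into its definition, and because deleting $f_k$ changes the positions of the remaining objects in the domain and codomain lists, one must carefully track which symmetry ends up where when the two nested traces are merged. Once these symmetries are arranged correctly, the five trace axioms do all the work and the equality is forced.
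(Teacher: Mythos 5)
Your proposal is correct and follows essentially the intended route: the paper itself defers this proof to~\cite{KissingerThesis}, and the expected argument is exactly your reduction (via Lemma~\ref{lem:contractions-commutative} and Lemma~\ref{lem:reorder-tc}) to the two-contraction identity $C_{i_{k,1}}^{j_s}\bigl(C_{i_t}^{j_{k,1}}(h\otimes 1_X)\bigr)=C_{i_t}^{j_s}(h)$, verified by merging the nested traces (axiom~3) and then applying sliding, tightening, and yanking. The only point worth making explicit is that superposing (axiom~4) is also needed, both to pull $1\otimes\Tr^X(\sigma_{X,X})$ out of the trace before yanking and to justify that applying the remaining $P-2$ contractions to $g\otimes 1_X$ yields $h\otimes 1_X$; this is routine bookkeeping of the kind you already flag.
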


  

For full proofs of lemmas \ref{lem:contractions-commutative}, \ref{lem:reorder-tc}, and \ref{cor:totally-contracted-ids}, see \cite{KissingerThesis}. The final ingredient we need for the main theorem is the correspondence between the operations $C_i^j$ and $\mathcal K_i^j$. First, note that labelled morphisms in \catFree are in 1-to-1 correspondence with tensors in $\atsFree$. That is, a morphism $\psi_{\vec x^\ii}^{\vec y^\oo} : \vec X \to \vec Y$ labelled by $(\vec i, \vec j)$ defines the tensor $\psi_{\vec i}^{\vec j}$. By abuse of notation, we will write $\psi_{\vec i}^{\vec j}$ for both the tensor and the corresponding labelled morphism.

  


\begin{lemma}\label{lem:c-k-equiv} \rm
  For some fixed objects $\vec X, \vec Y \in \catFree$, a labelled morphism $\psi_{\vec i}^{\vec j} : \vec X \to \vec Y$ in \catFree, and labels $i \in \vec i, j \in \vec j$:
  \[ C_i^j(\psi_{\vec i}^{\vec j}) = \mathcal K_i^j(\psi_{\vec i}^{\vec j}) \]
\end{lemma}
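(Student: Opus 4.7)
The plan is to unfold $C_i^j(\psi_{\vec i}^{\vec j}) = \Tr^{X_i=Y_j}(\sigma_{Y:j}\circ\psi_{\vec i}^{\vec j}\circ\sigma_{X:i}^{-1})$ using the explicit description of composition, symmetries, and trace in \catFree given in Definition~\ref{def:assoc-cat}, and then to collapse the resulting Einstein expression using the ATS axioms. Semantically, both operations feed the $i$-th input back into the $j$-th output; the content of the lemma is that once all the bookkeeping of moving those wires to the ``last'' position (and back) is stripped away, nothing remains but the ATS contraction $\mathcal K_i^j$.

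First I would write out $\sigma_{X:i}^{-1}$ and $\sigma_{Y:j}$ explicitly. By Definition~\ref{def:assoc-cat}, every symmetry map in \catFree is a product of $\delta$ elements whose upper/lower label pairings realise the required permutation: $\sigma_{X:i}^{-1}$ is a $\delta$-product that takes $X_i$ from the final position back to position $i$, with all other wires running straight through, and symmetrically for $\sigma_{Y:j}$. Then, using the composition rule $\phi\circ\psi = \psi_{\vec x^\ii}^{\vec y'}\phi_{\vec y'}^{\vec z^\oo}$ of Definition~\ref{def:assoc-cat}, the composite $\sigma_{Y:j}\circ\psi\circ\sigma_{X:i}^{-1}$ becomes a single Einstein expression consisting of $\psi$ flanked by strings of bridging $\delta$-elements on fresh labels.

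The key simplification is $\delta$-elimination, a direct consequence of T4 that was already noted immediately after the axioms. Each bridging $\delta$ either meets a free label of $\psi$ and is absorbed into it as a relabelling, or meets another $\delta$ and the two collapse via L1 and L4 into a single $\delta$. Applying this exhaustively, the whole expression reduces to $\psi$ with only the labels belonging to $X_i$ and $Y_j$ renamed so as to occupy the last input and last output slots---precisely the slots to which the trace applies. Finally, by Definition~\ref{def:assoc-cat} the trace renames those last input/output labels to a common fresh label $x'$, making $x'$ a repeated upper/lower label in the Einstein expression; by the definition of $\atsFree$ this is precisely the contraction $\mathcal K_{x'}^{x'}$ applied to the renamed tensor. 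Lemma~\ref{lem:bound-labels} then lets us rename $x'$ back to match $i$ and $j$, yielding $\mathcal K_i^j(\psi_{\vec i}^{\vec j})$.

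The main obstacle is purely clerical: careful tracking of the fresh labels introduced at every step of unfolding composition, symmetry, and trace, and verifying that every bridging $\delta$ does indeed cancel. No new ideas are required beyond axioms T3, T4, L1, L4 and Lemma~\ref{lem:bound-labels}, and in fact a diagrammatic presentation of the same argument---where each $\delta$-elimination is a wire-straightening---makes the cancellations visually obvious.
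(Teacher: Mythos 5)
Your proposal is correct and is essentially the intended argument: the paper states Lemma~\ref{lem:c-k-equiv} without an explicit proof (the verification is left routine, with the companion lemmas deferred to \cite{KissingerThesis}), and that verification is exactly your unfolding of Definition~\ref{def:assoc-cat} --- the symmetries $\sigma_{X:i}^{-1}$, $\sigma_{Y:j}$ are $\delta$-products, composition introduces bridging $\delta$'s that are absorbed by T4 (with L4), leaving $\psi$ relabelled so the contracted input/output sit in the last slots, and the trace then renames those two labels to a common fresh label, which is precisely the free-ATS contraction up to the choice of bound label (Lemma~\ref{lem:bound-labels}). Two cosmetic points only: $\K_{x'}^{x'}$ is not literally well-formed, since contraction requires a lower and an upper label that are distinct, so one should contract the two distinct fresh labels and then invoke Lemma~\ref{lem:bound-labels}; and the claim that an arbitrary permutation map in \catFree is a single $\delta$-product deserves the short induction on composites of $\sigma_{\vec X,\vec Y}$ and identities that your $\delta$-collapsing step implicitly performs.
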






\noindent With the help of these lemmas, we are now ready to prove the main theorem.

\begin{theorem}\label{thm:free-tsmc} \rm
  \catFree is the free strict traced symmetric monoidal category over \catSig.
\end{theorem}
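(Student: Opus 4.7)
The strategy is to construct, for any strict traced SMC $\mathcal{C}$ and signature homomorphism $v : \catSig \to \mathcal{C}$, a unique strict traced symmetric monoidal functor $\widetilde{v} : \catFree \to \mathcal{C}$ making diagram~\eqref{eq:free-triangle} commute; here $\eta$ is the valuation sending each $\psi_{\vec x^\ii}^{\vec y^\oo} \in \mathcal{A}$ to the $\approx$-class of the single-symbol Einstein expression $\psi_{\vec x^\ii}^{\vec y^\oo}$, viewed as a morphism $\vec X \to \vec Y$ in \catFree.

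On objects, strict monoidality forces $\widetilde{v}([X_1,\ldots,X_n]) = v(X_1) \otimes \cdots \otimes v(X_n)$, with $\widetilde{v}([\,]) = I$. On morphisms, given a class $|E| \in \hom_{\catFree}(\vec X, \vec Y)$, I would pick any representative $E = \Psi_1 \cdots \Psi_K$ (including any $\delta$-elements), interpret each relabelled tensor symbol $\Psi_k$ as the labelled morphism $v(\Psi_k)$ in $\mathcal{C}$ and each $\delta$-element as a labelled identity, form the disconnected labelled morphism $v(\Psi_1) \otimes \cdots \otimes v(\Psi_K)$ in the sense of Definition~\ref{def:disconnected-labelled}, apply the trace contractions $C_a^b$ for every repeated label pair of $E$ to reach contraction normal form (Definition~\ref{def:cnf}), and finally conjugate by symmetries to put the remaining free labels into the canonical orders $\vec x^\ii, \vec y^\oo$.

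The main obstacle is showing that $\widetilde{v}(|E|)$ does not depend on the choice of representative. The relation $\approx$ is generated by three moves: (i) renaming bound labels, (ii) permuting the order of tensor symbols, and (iii) inserting or deleting $\delta$-elements justified by axiom T4. Invariance under (i) follows from Lemma~\ref{lem:c-k-equiv}, which identifies the trace-contraction $C$ with the ATS-contraction $\mathcal K$ on \catFree, together with Lemma~\ref{lem:contractions-commutative}, which lets us reorder contractions freely. Invariance under (ii) is Lemma~\ref{lem:reorder-tc}, since after all repeated labels are contracted the permuted tensor symbols are totally contracted components. Invariance under (iii) for $\delta$-elements with both labels bound is Lemma~\ref{cor:totally-contracted-ids}, and the remaining T4 reductions $E\,\delta_a^b \equiv E^{[a\mapsto b]}$ (and its dual) amount to applying $C_a^{b}$ to the identity $v(\delta_a^b)$ tensored with $v(E)$, which is just a relabelling of the input/output list in $\mathcal{C}$.

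The remainder is routine. Functoriality and preservation of the strict traced symmetric monoidal structure follow by inspection from Definition~\ref{def:assoc-cat}: composition, tensor, identity, symmetry, and trace in \catFree are each built from the ATS operations on Einstein expressions, and $\widetilde{v}$ was designed so that each ATS operation maps to the corresponding TSMC operation in $\mathcal{C}$. Commutativity of the triangle $\widetilde{v}\circ\eta = v$ is immediate from the one-symbol representative of $\eta(\psi)$. For uniqueness, any strict traced symmetric monoidal functor $\widetilde{v}'$ with $\widetilde{v}'\circ\eta = v$ must agree with $\widetilde{v}$ on the image of $\eta$; since every morphism of \catFree is built from that image using composition, identities, tensor, symmetries and trace (this is exactly the content of the Einstein-expression presentation together with Lemma~\ref{lem:c-k-equiv}), we conclude $\widetilde{v}' = \widetilde{v}$.
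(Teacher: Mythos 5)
Your overall architecture coincides with the paper's (the same $\eta$, the same forced definition of $\widetilde v$ via trace-contractions of a disconnected morphism built from $v(\Psi_k)$'s, and uniqueness because every morphism of \catFree is generated from the image of $\eta$ by the traced symmetric structure), but there is a genuine gap in your well-definedness argument. You take an \emph{arbitrary} representative $E = \Psi_1\cdots\Psi_K$, contract all repeated labels, and claim invariance under permutation of the symbols via Lemma~\ref{lem:reorder-tc}, ``since after all repeated labels are contracted the permuted tensor symbols are totally contracted components.'' That is false in general: any symbol carrying a free label of $E$ (one attached to an input or output of the morphism) is \emph{not} totally contracted, and Lemma~\ref{lem:reorder-tc} says nothing about reordering such components. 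Permuting them changes the order of the free wires of the contracted morphism, and your proposed fix---``conjugate by symmetries to put the remaining free labels into the canonical orders''---makes the construction depend on where the free labels sit in the chosen representative; showing that this symmetry conjugation exactly compensates for the permutation is precisely what needs to be proved, and none of the quoted lemmas provide it. The same problem affects your treatment of the T4 reductions $E\,\delta_a^b \equiv E^{[a\mapsto b]}$ when one label of the $\delta$ is free: such a $\delta$ is not a totally contracted identity, so Lemma~\ref{cor:totally-contracted-ids} does not apply, and ``just a relabelling of the input/output list'' is an assertion rather than an argument (it needs the tightening/sliding trace axioms).

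The paper closes exactly this hole with a normal form you omit: every class is represented in the pinned form~(\ref{eq:pinned-form}), $E = \delta_{\vx^\ii}^{\vec{x'}}\,\delta_{\vec{y'}}^{\vy^\oo}\,\Psi_1\cdots\Psi_M$, in which the free labels occur only on the two $\delta$-pads (which become the identities $1_{\vec X}\otimes 1_{\vec Y}$ in the disconnected morphism) and every $\Psi_i$ is totally contracted. With that choice Lemma~\ref{lem:reorder-tc} genuinely covers all permutations of the $\Psi_i$, Lemma~\ref{cor:totally-contracted-ids} covers $\delta$-insertion and removal, and no after-the-fact symmetry conjugation is needed, so the cited lemmas suffice. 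Your proof can be repaired either by adding this pinning step or by proving the extra compatibility lemma (permutation of non-totally-contracted components versus symmetry conjugation) that you are implicitly using; as written, independence of $\widetilde v(|E|)$ from the representative is not established. The remaining ingredients---the object part, preservation of $C_i^j$ and $\otimes$, the triangle $\widetilde v\circ\eta = v$, and uniqueness---match the paper's proof.
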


\begin{proof}
  Let $\eta$ be the monoidal signature homomorphism that is identity on objects and sends morphisms $\psi_{\vx^\ii}^{\vy^\oo} \in \mathcal A$ to themselves, considered as Einstein expressions with one tensor symbol.
  \[ \psi_{\vx^\ii}^{\vy^\oo} \in \mathcal T(\wx^\ii,\wy^\oo) = \hom_{\catFree}(\vec X, \vec Y) \]
  
  Now, supposing we are given another monoidal signature homomorphism $v$ from $\textrm{Sig}(\mathcal A)$ into a strict traced symmetric monoidal category $\mathcal C$. Our goal is to build a traced symmetric monoidal functor $\widetilde v : \catFree \to \mathcal C$ such that $\widetilde v \eta = v$. On objects:
  \[
  \widetilde v([X_1, X_2, \ldots, X_n]) = v(X_1) \otimes v(X_2) \otimes \ldots \otimes v(X_n)
  \]
  
  Let $|E| : \vec X \to \vec Y$ be morphism in \catFree. In other words, it is an equivalence class of Einstein expressions, up to permutation of tensor symbols, renaming of bound labels, and $\delta$-contraction. Choose some representative $E$ of $|E|$ such that $E$ is of the form:
  \begin{equation}\label{eq:pinned-form}
    E = \delta_{\vx^\ii}^{\vec{x'}} \delta_{\vec{y'}}^{\vy^\oo} \Psi_1 \Psi_2 \ldots \Psi_M
  \end{equation}
  where $\Psi_i$ are tensor symbols (or $\delta$-elements) and $E' = \Psi_1 \Psi_2 \ldots \Psi_M$ is an Einstein expression with upper labels $\vec{x'}$ and lower labels $\vec{y'}$. In other words, $E'$ contains no free labels, and $\vec{x'}$ and $\vec{y'}$ are disjoint. Form an expression $F$ from $E$ by choosing a fresh $j_k$ for each repeated label $i_k$. Reading the repeated indices in $E$ from left to rewrite, the choice of $E$ fixes a unique expression:
  \[
    E = \K_{i_1}^{j_1}(\K_{i_2}^{j_2}(\ldots (\K_{i_N}^{j_N}(F)) \ldots )
  \]
  which, by Lemma~\ref{lem:c-k-equiv} can be expressed:
  \[
    E = C_{i_1}^{j_1}(C_{i_2}^{j_2}(\ldots (C_{i_N}^{j_N}(F) \ldots )
  \]
  up to bound labels $j_k$. Since $F$ contains no repeated labels, 
  \[
    E = C_{i_1}^{j_1}(C_{i_2}^{j_2}(\ldots (C_{i_N}^{j_N}(1_{\vec X} \otimes 1_{\vec Y} \otimes \Psi_1 \otimes \ldots \otimes \Psi_n) \ldots )
  \]
  Then, $\widetilde v$ must respect $v$ and preserve the traced symmetric monoidal structure. In particular, it must preserve $C_i^j$. So, the only possible value for $\widetilde v(\widehat E)$ is:
  \[
    \widetilde v(\widehat E) = C_{i_1}^{j_1}(C_{i_2}^{j_2}(\ldots (C_{i_N}^{j_N}(
      1_{v(\vec X)} \otimes 1_{v(\vec Y)} \otimes v(\Psi_1) \otimes \ldots \otimes v(\Psi_n)) \ldots )
  \]
  where the labelling on the argument is inherited from the labelling on $F$.


  
  Then, since all of the non-canonical labels in $E$ are contracted, $\widetilde v(\widehat E)$ is indeed a morphism from $\widetilde v(\vec X)$ to $\widetilde v(\vec Y)$. For this to be well-defined, we need to show it does not depend on the choice of $E$. First, note that the choice of bound labels is irrelevant because the operation $C_i^j$ is defined in terms of (pairs of) \textit{positions} of labels, and does not depend on the choice of labels themselves. Next, consider the form (\ref{eq:pinned-form}) we have fixed for $E$. The order of tensor symbols is fixed except for in $\Psi_1\Psi_2\ldots\Psi_M$. But then, all of the symbols in $\Psi_1\Psi_2\ldots\Psi_M$ must be totally contracted, so by Lemma~\ref{lem:reorder-tc}, the order of the corresponding $v(\Psi_i)$ are irrelevant. Furthermore, $\delta$ expansion or removal will not affect the value of $\widetilde v(\widehat E)$ by corollary~\ref{cor:totally-contracted-ids}. Thus $\widetilde v$ is well-defined.
  
  Next, we show that $\widetilde v$ is a traced symmetric monoidal functor. It follows immediately from the definition that $\widetilde v$ preserves the $C_i^j$ operation:
  \[
  C_i^j(\widetilde v(f)) = \widetilde v(C_i^j(f))
  \]
  where $\widetilde v(f)$ inherits its labelling from $f$. The fact that $\widetilde v$ preserves the monoidal product follows from the definition of $C_i^j$ and the trace axioms. Then, since all of the rest of the traced symmetric monoidal structure can be defined in terms of $C_i^j$ and $\otimes$, $\widetilde v$ must preserve it.
\end{proof}

This suffices to establish that \catFree is the free \textit{strict} traced symmetric monoidal category. The extension to the non-strict case is now routine.

\begin{corollary} \rm
  \catFree is the free traced symmetric monoidal category over \catSig.
\end{corollary}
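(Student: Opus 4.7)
The plan is to reduce to the strict case already proved in Theorem~\ref{thm:free-tsmc} by a coherence argument. First I would invoke the coherence theorem for traced symmetric monoidal categories: every traced SMC $\mathcal C$ is strong traced monoidally equivalent to a strict one $\mathcal C^{\text{str}}$ via an equivalence $\Phi : \mathcal C^{\text{str}} \to \mathcal C$ (with quasi-inverse $\Psi$) whose structural isomorphisms are the canonical coherence data coming from Mac Lane's strictification applied to the underlying symmetric monoidal category, together with the trace transported along $\Phi$.

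Given a valuation $v \in \operatorname{ob}([\catSig, \mathcal C])$, transport it to a valuation $v^{\text{str}} := \Psi \circ v \in \operatorname{ob}([\catSig, \mathcal C^{\text{str}}])$. Because $\catSig$ is a strict monoidal signature and $\mathcal C^{\text{str}}$ is strict, $v^{\text{str}}$ is a monoidal signature homomorphism, so Theorem~\ref{thm:free-tsmc} yields a unique \emph{strict} traced symmetric monoidal functor $\widetilde{v^{\text{str}}} : \catFree \to \mathcal C^{\text{str}}$ with $\widetilde{v^{\text{str}}} \circ \eta = v^{\text{str}}$. Define the candidate lift by $\widetilde v := \Phi \circ \widetilde{v^{\text{str}}}$. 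Since $\Phi$ is strong traced symmetric monoidal and $\widetilde{v^{\text{str}}}$ is strict, the composite is a strong traced symmetric monoidal functor, and $\widetilde v \circ \eta = \Phi \circ \Psi \circ v$ is isomorphic to $v$ via the counit of the equivalence.

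To verify that $(-\circ \eta) : \catTSMC(\catFree, \mathcal C) \to [\catSig, \mathcal C]$ is an equivalence of categories, I would check essential surjectivity and fully faithfulness separately. Essential surjectivity is exactly the construction of $\widetilde v$ above, the required valuation isomorphism $\widetilde v \circ \eta \cong v$ being provided by the counit of $\Phi \dashv \Psi$. For fully faithfulness, given two strong traced symmetric monoidal functors $F, G : \catFree \to \mathcal C$ and a valuation isomorphism $\alpha : F \circ \eta \Rightarrow G \circ \eta$, transport $F$, $G$, and $\alpha$ through $\Psi$ to the strict setting: this reduces the problem to uniqueness of monoidal natural isomorphisms extending a given isomorphism on generators, which follows because every morphism of \catFree is built from $\eta$ via tensor, composition, symmetry, identities and trace, all of which are preserved up to coherent isomorphism.

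The main obstacle is bookkeeping the coherence data across the equivalence $\mathcal C \simeq \mathcal C^{\text{str}}$: one must check that the assignment $v \mapsto \widetilde v$ is (pseudo)functorial in valuation morphisms and that the ``$-\circ\eta$'' operation commutes with transport along $\Phi$ up to coherent natural isomorphism. These diagrams are large but routine once the strictification is fixed. A secondary concern is that a fully-stated coherence theorem for the \emph{traced} setting is less standard than in the plain monoidal case; I would justify it by observing that Joyal and Street's strictification for symmetric monoidal categories~\cite{JS} applies to the underlying SMC of $\mathcal C$ and that the trace transfers uniquely along the resulting strong symmetric monoidal equivalence using axiom (1) of Definition~\ref{def:traced-category}, yielding the desired strict traced SMC $\mathcal C^{\text{str}}$.
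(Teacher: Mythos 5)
Your proposal is correct and follows essentially the route the paper itself takes: its proof of the corollary just invokes Theorem~\ref{thm:free-tsmc} for the strict case and defers the non-strict extension to the argument of Joyal and Street's Theorem~1.2, which is exactly the strictification-and-transport argument you spell out (including transferring the trace along the symmetric monoidal strictification and checking essential surjectivity and full faithfulness of $(-\circ\eta)$). The only point to tidy is that $\Psi \circ v$ is a valuation into $\mathcal C^{\mathrm{str}}$ only after inserting the coherence isomorphisms of $\Psi$, which falls under the routine bookkeeping you already flag.
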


\begin{proof}
  Theorem~\ref{thm:free-tsmc} establishes the correspondence between valuations and traced symmetric monoidal functors when $\mathcal C$ is strict. The remainder of the proof is similar to that of theorem 1.2 in~\cite{JS}.
\end{proof}

\subsection{The diagrammatic free category}

In \cite{JS}, Joyal and Street defined the free symmetric monoidal category in terms of \textit{anchored diagrams} with valuations.

\begin{definition}
  A generalised topological graph is a topological space $G$ and a distinguished finite discrete subspace $G_0$ such that $G - G_0$ is isomorphic to a disjoint union of finitely many copies of the open interval $(0,1)$ or circles $S^1$.
\end{definition}

The points in $G_0$ are called nodes and the components of $G - G_0$ are called wires. An anchored diagram is then a generalised topological graph with three extra pieces of data:
\begin{enumerate}
  \item A choice of orientation for each wire.
  \item For each $n \in G_0$, a total ordering for the input and output wires of $n$, where inputs and outputs are distinguished using the orientation of wires.
  \item A total ordering on the inputs and outputs of $G$ as a whole, i.e. the ends of wires that are not connected to nodes.
\end{enumerate}

A valuation $v$ of $G$ is then an assignment of objects to wires and morphisms to the points in $G_0$. The data $(G,v)$ can be pictured as follows:
\ctikzfig{anchored_graph}

Let $\textbf{Anchor}(\catSig)$ be the category whose objects are lists of objects from \catSig and whose morphisms are isomorphism classes of anchored diagrams with valuations. Composition is defined by plugging the outputs of one diagram into the inputs of another and monoidal product as diagram juxtaposition. Let $\textbf{Anchor}_p(\catSig)$ be the restriction of morphisms to \textit{progressive} diagrams, i.e. diagrams with no feedback. For details on this construction, see~\cite{JS}.

\begin{theorem}[\cite{JS}] \rm
  $\textbf{Anchor}_p(\catSig)$ is the free symmetric monoidal category on \catSig.
\end{theorem}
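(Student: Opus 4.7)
The approach I would take is to adapt the proof of Theorem~\ref{thm:free-tsmc} to the non-traced, progressive setting. The key observation is that, by the correspondence between diagrams and Einstein expressions established in Section~\ref{sec:diagrams}, a progressive anchored diagram over $\catSig$ corresponds to an equivalence class of Einstein expressions whose ``flow graph'' (with tensor symbols as vertices and contractions as edges between them) is acyclic. On such expressions, every contraction can be realised as a plain composition, so no trace is needed, and the classifying category must be the free symmetric (rather than traced) monoidal category.

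First, I would define the unit valuation $\eta : \catSig \to \textbf{Anchor}_p(\catSig)$ sending each object to itself and each generator $\psi : \vec X \to \vec Y$ to its one-box diagram, with the obvious input/output orderings. Given an arbitrary valuation $v : \catSig \to \mathcal{C}$ into a symmetric monoidal category, I would construct $\widetilde v$ as follows: for each progressive diagram $(G,w)$, use a topological sort of the nodes of $G$ (possible because $G$ has no feedback) to write $(G,w)$ as a composition of generators from $\catSig$, identities, and symmetries, and define $\widetilde v(G,w)$ to be the image of this expression under $v$ and the symmetric monoidal structure of $\mathcal{C}$.

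The main obstacle, exactly as in Theorem~\ref{thm:free-tsmc}, is well-definedness: two different topological sorts of the same diagram must be sent to the same morphism, and isomorphic anchored diagrams must agree on the nose. This is the analogue of the reordering step in the traced case, and it reduces to the interchange law, naturality of the symmetry, and the symmetric monoidal coherence conditions. In the non-traced case this is strictly easier than Lemma~\ref{lem:reorder-tc}, because all contractions are local and no global trace argument is needed; the only genuine subtlety is handling the re-ordering of components $f_k$ whose input/output wires cross in a symmetry, which is governed by naturality of $\sigma$.

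Finally, strong monoidality of $\widetilde v$ is immediate from the definition, functoriality follows because composition of diagrams is implemented exactly by concatenating topological sorts, and uniqueness up to monoidal natural isomorphism follows by the standard argument: any strong symmetric monoidal functor extending $v$ agrees with $\widetilde v$ on generators, identities, and symmetries, and these generate $\textbf{Anchor}_p(\catSig)$. The proof structure is essentially that of Theorem~\ref{thm:free-tsmc} with the operator $C_i^j$ replaced by ordinary composition, and with the trace axioms dropped.
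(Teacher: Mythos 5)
Note first that the paper does not actually prove this statement: it is imported verbatim from Joyal and Street~\cite{JS}, where the substantial content is that the value of a progressive diagram in a symmetric monoidal category is invariant under deformation (isomorphism of anchored diagrams). Your proposal therefore takes a genuinely different, more self-contained route: you re-prove the result directly by adapting Theorem~\ref{thm:free-tsmc}, replacing the contraction operator $C_i^j$ by ordinary composition. The outline is sound: a progressive diagram admits a topological sort, hence a decomposition into layers of generators, identities and symmetries; any two such sorts of the same diagram differ by transpositions of incomparable nodes, so agreement of values reduces to the interchange law and naturality of $\sigma$; the choice of how wires are routed between layers is absorbed by symmetric coherence; and functoriality plus uniqueness follow because generators, identities and symmetries generate $\textbf{Anchor}_p(\catSig)$ under $\circ$ and $\otimes$. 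What this buys is independence from~\cite{JS} and uniformity with the paper's traced argument (your ``topological sort'' step is exactly the progressive analogue of the reordering handled by Lemma~\ref{lem:reorder-tc}); what it costs is that essentially the whole weight of the theorem sits in the well-definedness step that you name but do not carry out. A complete writeup would have to (i) prove the combinatorial claim that any two layered decompositions of a fixed anchored diagram are connected by elementary interchange and symmetry-naturality moves, and (ii) check invariance of $\widetilde v$ under isomorphism of anchored diagrams themselves, not merely under re-sorting of a fixed representative; both are standard but are precisely where the work of~\cite{JS} lives, so as it stands your argument is a correct strategy rather than a proof.
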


If we drop the progressive constraint, $\textbf{Anchor}(\catSig)$ admits a trace operation in the obvious way. So, we can now state the above result for traced symmetric monoidal categories as a corollary to Theorem~\ref{thm:free-tsmc}. This is due to the close relationship between Einstein expressions in diagrams demonstrated in section~\ref{sec:diagrams}.

\begin{corollary} \rm
  $\textbf{Anchor}(\catSig)$ is the free traced symmetric monoidal category on \catSig.
\end{corollary}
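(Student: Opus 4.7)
The plan is to reduce the corollary to Theorem~\ref{thm:free-tsmc} by establishing an isomorphism of traced symmetric monoidal categories between $\textbf{Anchor}(\catSig)$ and $\catFree$, and then transporting the universal property along this isomorphism. The groundwork for such an isomorphism is already in place: Section~\ref{sec:diagrams} showed that diagrams faithfully represent tensors in $\atsFree$, so the bulk of the proof is upgrading that correspondence to a functorial equivalence that respects composition, monoidal product, symmetry, and trace.

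Concretely, I would first define a functor $\Phi : \textbf{Anchor}(\catSig) \to \catFree$ on objects as the identity on lists in $\mathcal U^*$, and on morphisms by picking a representative anchored diagram $(G,v)$, choosing fresh labels for all internal wires (and an arbitrary canonical labelling for the boundary wires via $\vx^\ii,\vy^\oo$), reading off a tensor symbol for each node and a $\delta$-element for each bare wire, and juxtaposing them into an Einstein expression. The theorem in Section~\ref{sec:diagrams} shows this assignment is independent of the choice of labels and of the ordering of nodes, hence descends to $\approx$-equivalence classes, and also independent of the representative of the isomorphism class of $(G,v)$, since ambient homeomorphisms permute only nodes and rename only wires. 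I would then define $\Psi$ in the opposite direction by drawing a box for each $\Psi_i$ in a canonical reduced representative of $|E|$, drawing a wire between every pair of matched labels, a bare wire for each $\delta_a^b$, and a circle for each $\delta_a^a$; uniqueness of the reduced form up to bound-label renaming and node permutation (noted after the $\delta$-reduction discussion) makes this well-defined on isomorphism classes of diagrams.

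Next, I would verify that both $\Phi$ and $\Psi$ preserve all the structure. For composition, plugging outputs into inputs in $\textbf{Anchor}$ amounts, after relabelling, to the contraction-based composition of Definition~\ref{def:assoc-cat}; for monoidal product, juxtaposition of diagrams corresponds exactly to the canonical shifting of labels in the definition of $\otimes$ on $\catFree$; symmetry maps are represented by crossing bare wires, which translates to the $\delta$-expression given for $\sigma_{\vec X,\vec Y}$; and the trace operation (feedback) corresponds to contracting canonical labels, again by Definition~\ref{def:assoc-cat}. Each check is a direct unwinding of definitions, invoking Lemma~\ref{lem:bound-labels} to handle label choices and Lemma~\ref{lem:c-k-equiv} to identify $\mathcal K$-contractions with $C$-contractions. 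Showing $\Phi \circ \Psi$ and $\Psi \circ \Phi$ are identities reduces to the uniqueness-up-to-$\approx$ of Einstein expressions read off from a diagram and vice versa.

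The main obstacle is not any single calculation but the bookkeeping around labelling conventions: $\textbf{Anchor}(\catSig)$ has a topological notion of equivalence (ambient isotopy of anchored graphs), whereas $\catFree$ is built from equivalence classes $|E|$ of purely syntactic expressions under $\approx$. Matching these two equivalence relations precisely — in particular, verifying that every isotopy of an anchored diagram can be realised by a finite sequence of node-permutations, bound-label renamings, and $\delta$-insertions/removals — is the delicate step. Once this isomorphism $\Phi : \textbf{Anchor}(\catSig) \cong \catFree$ is in hand as strict traced symmetric monoidal categories, the corollary is immediate: given any traced SMC $\mathcal C$ and valuation $\eta' : \catSig \to \mathcal C$, Theorem~\ref{thm:free-tsmc} (together with its non-strict extension) produces the unique-up-to-isomorphism strong traced monoidal functor $\catFree \to \mathcal C$ extending $\eta'$, which pulls back along $\Phi$ to the required functor out of $\textbf{Anchor}(\catSig)$.
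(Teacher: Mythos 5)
Your proposal follows essentially the same route as the paper: the paper's (much terser) proof likewise establishes a traced symmetric monoidal isomorphism between $\textbf{Anchor}(\catSig)$ and \catFree, using the diagram--Einstein-expression correspondence of section~\ref{sec:diagrams} with the anchoring recovered from the ordering of canonical labels, and then transports freeness from Theorem~\ref{thm:free-tsmc}. Your write-up simply fills in more of the bookkeeping (the two functors, structure preservation, and the matching of isotopy with $\approx$-equivalence) that the paper leaves as a sketch.
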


\begin{proof}
  We can prove this by demonstrating a (traced symmetric monoidal) isomorphism of categories between $\textbf{Anchor}(\catSig)$ and \catFree. These categories have the same objects, so it suffices to show an isomorphism of hom-sets. By a construction similar to that described in section~\ref{sec:diagrams}, a tensor in \catFree defines a unique anchored diagram, where the total ordering on inputs and outputs is defined using the partial ordering on canonical labels: $x_i \leq y_j \Leftrightarrow i \leq j$. Furthermore, any anchored diagram can be expressed this way.
\end{proof}

\bibliographystyle{akbib}
\bibliography{bibfile}

\end{document}